\newcommand{\doublewidealign}[3]{\begin{minipage}[t]{#1}\begin{align*}#2\end{
align*}\end{minipage}\begin{minipage}[t]{2.5in}\begin{align*}#3\end{align*}\end{
minipage}}
\newcommand{\Abs}[1]{\left\Vert  #1 \right\Vert}
\newcommand{\abs}[1]{\left| #1 \right|}
\newcommand{\varep}{\varepsilon}
\newcommand{\R}{\mathbb{R}}
\newcommand{\RP}{\mathbb{RP}}
\newcommand{\C}{\mathbb{C}}
\newcommand{\psl}[1][\C]{\rm{PSL}_2(#1)}
\newcommand{\fund}[1]{\pi_1(#1)}
\newcommand{\slpm}[2][]{{\rm SL}^{\pm}_{#1}(#2)}
\newcommand{\pgl}{\rm{PGL}_{4}(\R)}
\newcommand{\PGL}[2][]{\rm{PGL}_{#1}(#2)}
\newcommand{\GL}[2][]{\rm{GL}_{#1}(#2)}
\newcommand{\so}[1]{{\rm SO}(#1,1)}
\newcommand{\rvar}[2][\pgl]{\mathfrak{R}(#2,#1)}
\newcommand{\charvar}[2][\pgl]{\mathfrak{X}(#2,#1)}
\newcommand{\Z}{\mathbb{Z}}
\newcommand{\gold}[1]{\mathfrak{B}(#1)}
\newcommand{\HH}{\mathbb{H}}
\newcommand{\dhilb}[3][\Omega]{d_{#1}(#2,#3)}
\newcommand{\hilbvol}[2][\Omega]{\mu_{#1}(#2)}
\newcommand{\bs}{\backslash}
\newcommand{\be}{\begin{enumerate}}
\newcommand{\ee}{\end{enumerate}}
\newcommand{\bd}{\begin{description}}
\newcommand{\ed}{\end{description}}
\newcommand{\ol}[1]{\overline{#1}}
\newcommand{\rhogeo}{\rho_{{\rm geo}}}
\newtheorem{theorem}{Theorem}
\newtheorem{proposition}[theorem]{Proposition}
\newtheorem{corollary}[theorem]{Corollary}
\newtheorem{lemma}[theorem]{Lemma}
\newtheorem{remark}[theorem]{Remark}
\numberwithin{equation}{section}
\numberwithin{theorem}{section}
\begin{document}

\title{Finite Volume Properly Convex Deformations of the Figure-eight Knot}
\author{Samuel A. Ballas}
\email{sballas@math.ucsb.edu}
\date{\today}
\address{Department of Mathematics\\ 
University of California Santa Barbara\\ CA 93106}

\maketitle

\begin{abstract}
 In this paper we show that some open set of the representations of the fundamental group of figure-eight knot complement found in \cite{Ballas12a} are the holonomies of a family of finite volume properly convex projective structures on the figure-eight knot complement. 
\end{abstract}

\section{Introduction}

Let $M$ be an orientable hyperbolizable $n$-manifold with fundamental group $\Gamma$. When $n$ is at least 3, Mostow-Prasad rigidity tells us that if $M$ admits a complete finite volume hyperbolic structures then this structure is unique, up to isometry. Such a structure gives rise to a discrete faithful holonomy representation $\rhogeo:\Gamma\to \text{Isom}^+(\HH^n)$ (unique up to conjugation in $\text{Isom}(\HH^n)$) and $M$ can be realized as $\HH^n/\rhogeo(\Gamma)$. The representation coming from this structure is called the \emph{geometric representation}.

We can view this construction in the projective setting: Let $\Omega\subset \RP^n$ be the projectivization of the negative cone of a quadratic form of signature $(n,1)$. Then $\HH^n$ can be identified with $\Omega$ in such a way that elements of $\text{Isom}^+(\HH^n)$ correspond to  $\text{PSO}(n,1)$. Using this identification, $M$ can be realized as $\Omega/\rhogeo(\Gamma)$.  \emph{Properly convex projective structures} offer us a flexible way to generalize this construction. In this setting we no longer insist that $\Omega$ is the projectivization of the negative cone of a form. Instead we ask only that $\Omega$ is properly convex and that $M$ can be realized as $\Omega/\rho(\Gamma)$, where $\rho:\Gamma\to \PGL[n+1]{\R}$ is a discrete and faithful representation whose image preserves $\Omega$.  

There is no notion of Mostow rigidity in this setting and so the deformation theory of convex projective structures is richer than its hyperbolic counterpart in the following sense. Let $\gold{M}$ be the set of (equivalence classes of) properly convex projective structures on $M$. Mostow rigidity tells us that there is a distinguished point $N_{hyp}\in\gold{M}$ corresponding to the unique complete hyperbolic structure on $M$.  There are examples of manifolds for which $\gold{M}$ has positive dimension at $N_{hyp}$ (see \cite{Goldman90,Marquis12b,CooperLongThist06,PortiHeusener09,Ballas12a} for examples). In contrast to these results, there are many hyperbolic manifolds whose hyperbolic structure cannot be deformed to a non-hyperbolic convex projective structure \cite{Ballas12a,CooperLongThist07}. Said another way, it is possible to find $M$ such that $N_{hyp}$ is an isolated point of $\gold{M}$. 

An important principle in the proof of the results in the previous paragraph is that conjugacy classes of representations of $\Gamma$ into $\PGL[n+1]{\R}$ near $\rhogeo$ give a local parameterization of (equivalence classes of) projective structures on $M$. When $M$ is closed, work of Koszul \cite{Koszul68} shows that the projective structures near $N_{hyp}$ corresponding to conjugacy classes near $\rhogeo$ are properly convex. 

When $M$ is non-compact Koszul's result is not longer true. In general there are representations near $\rhogeo$ that are not the holonomy of any point in $\gold{M}$. An example of such representations are the holonomies of incomplete hyperbolic structures on non-compact $3$-manifolds. Fortunately, recent work of Cooper and Long \cite{CooperLong13} has shown that if the nearby representation satisfies some mild hypotheses then it is the holonomy of a (possibly not properly convex) projective structure on $M$ with nice behavior near the boundary. Furthermore, Cooper, Long, and Tillmann \cite{CooperLongTillman13} have shown that if the restriction of the representation to the peripheral subgroups (fundamental groups of the boundary components) satisfies slightly stronger hypotheses then the representation is the holonomy of a properly convex projective structure.

In \cite{Ballas12a} the author showed that when $M$ is the figure-eight knot complement there exists an explicit family of representations, $\rho_s:\Gamma\to\pgl$ that satisfy the following conditions.
\begin{itemize}
 \item $\rho_0=\rhogeo$,
 \item the $\rho_s$ are pairwise non-conjugate, and 
 \item $\rho_s$ maps the meridian of $M$ to a unipotent element of $\pgl$ and the longitude to a non-unipotent element of $\pgl$.
\end{itemize}

By applying results from \cite{CooperLongTillman13} and carefully analyzing the structure of the cusps we are able to prove the main theorem of this paper.

\begin{theorem}\label{t:maintheorem}
 Let $M$ be the complement in $S^3$ of the figure-eight knot. There exists $\varep$ such that for each $s\in(-\varep,\varep)$, $\rho_s$ is the holonomy of a finite volume properly convex projective structure on $M$. Furthermore, when $s\neq 0$ this structure is not strictly convex.  
\end{theorem}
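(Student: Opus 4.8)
\emph{Proof proposal.}

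The plan is to reduce the theorem to a statement about the single cusp of $M$ and then to invoke the work of Cooper, Long, and Tillmann \cite{CooperLongTillman13}. Since $M$ is a knot complement it has exactly one torus cusp, so the relevant peripheral subgroup is the rank-two abelian group $P=\langle m,l\rangle\cong\Z^2$, where $m$ and $l$ denote the meridian and longitude. The representations $\rho_s$ are written down explicitly in \cite{Ballas12a}, so I would begin by recording $\rho_s(m)$ and $\rho_s(l)$ and studying the subgroup $\rho_s(P)\subset\pgl$. Because $P$ is abelian the two matrices commute and may be analyzed simultaneously; by hypothesis $\rho_s(m)$ is unipotent while $\rho_s(l)$ is non-unipotent for $s\ne 0$.

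The heart of the argument is to show that $\rho_s|_P$ is the holonomy of a generalized cusp satisfying the hypotheses of \cite{CooperLongTillman13}. Concretely, I would conjugate $\rho_s|_P$ into a normal form adapted to the common Jordan structure of the commuting generators and then exhibit a properly convex open set $\U\subset\RP^3$ preserved by $\rho_s(P)$ on which the action yields a finite-volume cusp. Checking that the peripheral representation meets the (slightly stronger) hypotheses of \cite{CooperLongTillman13} is the main technical content, and the delicate point is to verify that these conditions hold for every $s$ in an interval rather than at a single parameter; this uniformity is precisely what produces the $\varep$ in the statement.

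Granting the cusp analysis, the theorem of \cite{CooperLongTillman13} applies directly and yields a properly convex projective structure on $M$ with holonomy $\rho_s$. Finiteness of the total volume then follows from finiteness of the volume of the generalized cusp together with the compactness of the complementary core, since $M$ decomposes as this core glued to the cusp neighborhood.

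For the final assertion I would argue that the failure of strict convexity is forced by the non-unipotent longitude. When $s=0$ the cusp is the standard parabolic hyperbolic cusp and the domain $\Omega$ is the strictly convex Klein ball, but for $s\ne 0$ the element $\rho_s(l)$ has an eigenvalue different from $1$, so $\rho_s(P)$ is no longer unipotent and the associated generalized cusp is of a type whose boundary contains a line segment. Because this cusp embeds in the developing image, that segment persists in $\partial\Omega$, so $\Omega$ cannot be strictly convex. Throughout, I expect the principal obstacle to be the explicit identification and control of the invariant convex set $\U$ for the deformed peripheral representation, since finite volume, proper convexity, and the failure of strict convexity are all to be read off from the geometry of this single cusp.
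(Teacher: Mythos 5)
Your first half---conjugating $\rho_s$ on the peripheral subgroup into a normal form, exhibiting an invariant properly convex set so that the cusp becomes a generalized cusp, and invoking the deformation theorem of \cite{CooperLongTillman13}---is essentially the paper's route to Proposition \ref{p:convexstructure} (note only that the $\varep$ comes from the ``sufficiently close'' hypothesis of Theorem \ref{t:holonomytheorem}, not from any uniformity in $s$ of the cusp conditions). The genuine gap is the finite-volume claim, which you dispose of in one sentence but which is the main technical content of the paper. First, the model cusp $D'/\Gamma$, for $\Gamma$ a lattice in $L'$, has \emph{infinite} Busemann volume, so ``finiteness of the volume of the generalized cusp'' is simply not available as a known input; one must truncate to a horoball sub-cusp and prove finiteness of the volume of a fundamental domain, which rests on the unit-ball growth estimate $Cx_1^{3/2}<\mu_L(B^{D'}_x(1))$ of Lemma \ref{l:euclideanvolume} and on Proposition \ref{t:finitevolume}. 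Second, the volume of $M$ must be computed in the Hilbert metric of the domain $\Omega_s$ produced by Theorem \ref{t:holonomytheorem}, and $\Omega_s$ is not explicit; to transfer the model estimate one needs the sandwich $\mathcal{B}\subset\Omega_s\subset\mathcal{B}'$ by $L'$-horoballs (Lemma \ref{l:horoapprox}, proved via equivariance of boundary functions and cocompactness of the $\Delta_s$-action) together with the comparison property $\hilbvol[\Omega_s]{K}\leq\hilbvol[\mathcal{B}]{K}$ for $\mathcal{B}\subset\Omega_s$. Third, you must show the cusp neighborhood is \emph{embedded} in $\Omega_s/\Gamma_s$ with compact complement, i.e.\ that some horoball is precisely invariant under $\Delta_s$; this is Lemma \ref{l:precinvhoroball}, which requires the properly convex Margulis lemma of \cite{CooperLongTillman11} and a fixed-point argument using the hyperbolic action of $\Gamma_s$. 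None of these steps is automatic, and your proposal supplies none of them.

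Your argument for failure of strict convexity also has a gap. An embedded cusp gives a $\Delta_s$-invariant convex set $\U\subset\Omega_s$ whose closure contains a segment; inclusion then yields only that the segment lies in $\overline{\Omega_s}$, not that it avoids $\Omega_s$, so you cannot conclude it lies in $\partial\Omega_s$ without the \emph{upper} bound $\Omega_s\subset\mathcal{B}'$---again Lemma \ref{l:horoapprox}. The paper sidesteps this geometry entirely: for $s\neq 0$ the matrix $\mathcal{L}_s$ has two distinct positive real eigenvalues, hence is not projectively equivalent to a parabolic, and by \cite{BallasThesis} the holonomy of a finite-volume strictly convex structure must carry peripheral elements to such elements; this immediately rules out strict convexity.
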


\begin{remark}
 In recent work with D.\ Long \cite{BallasLong14}, we are able to show that Theorem \ref{t:maintheorem} holds for all $s\in \R$. 
\end{remark}

Rephrasing Theorem \ref{t:maintheorem} in terms of $\gold{M}$ we have the following immediate corollary.

\begin{corollary}\label{c:structurecurve} 
Let $M$ be the complement in $S^3$ of the figure-eight knot. Then there is a non-trivial curve $c:(-\varep,\varep)\to \gold{M}$ such that $c(0)=N_{hyp}$. Furthermore, for every $s\in (-\varep,\varep)$, $c(s)$ has finite Busemann volume and $c(s)$ corresponds to a strictly convex structure if and only if $s=0$.
\end{corollary}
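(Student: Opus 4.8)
The plan is to bundle the structures produced by Theorem \ref{t:maintheorem} into a single map into $\gold{M}$ and then check the listed properties one at a time. For each $s\in(-\varep,\varep)$, Theorem \ref{t:maintheorem} produces a finite volume properly convex projective structure on $M$ with holonomy $\rho_s$; define $c(s)\in\gold{M}$ to be the equivalence class of this structure. Since $\rho_0=\rhogeo$ and the complete hyperbolic structure on $M$ is precisely the properly convex projective structure obtained by identifying $\HH^3$ with the projectivization of the negative cone of a form of signature $(3,1)$, we get $c(0)=N_{hyp}$.

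Next I would argue that $c$ is genuinely a curve, i.e.\ a continuous and non-constant path. For continuity I would invoke the local parameterization principle recalled in the introduction: near $\rhogeo$, conjugacy classes of holonomy representations locally parameterize equivalence classes of projective structures on $M$. The assignment $s\mapsto\rho_s$ is continuous (the family is explicit), and composing with the passage from conjugacy classes to $\gold{M}$ yields a continuous $c$. Non-triviality is then immediate from the second bulleted property of the family: the $\rho_s$ are pairwise non-conjugate, so the classes $c(s)$ are pairwise distinct, and in particular $c$ is non-constant.

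It remains to verify the two \emph{furthermore} assertions. Finite Busemann volume is inherited directly from the finite volume conclusion of Theorem \ref{t:maintheorem}, the relevant volume being the Busemann volume of the Hilbert metric on the underlying properly convex domain. For the strict convexity dichotomy, the backward implication is clear since $c(0)=N_{hyp}$ is modeled on the projectivized negative cone of a quadratic form, which is strictly convex; the forward implication is exactly the contrapositive of the final sentence of Theorem \ref{t:maintheorem}, which asserts that $c(s)$ fails to be strictly convex whenever $s\neq0$. Together these give that $c(s)$ is strictly convex if and only if $s=0$.

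I expect the only points requiring genuine care to be matching the notion of \emph{finite volume} in Theorem \ref{t:maintheorem} with \emph{finite Busemann volume}, and pinning down the continuity of $c$ precisely enough to justify calling it a curve; everything else is a direct translation of the theorem's conclusions into the language of $\gold{M}$.
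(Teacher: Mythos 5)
Your proposal is correct and matches the paper's treatment: the paper states this corollary as an immediate rephrasing of Theorem \ref{t:maintheorem} in the language of $\gold{M}$, with non-triviality coming from the pairwise non-conjugacy of the $\rho_s$, continuity from the local parameterization of structures by holonomy (Theorem \ref{t:thurstonholonomy}), and the finite volume and strict convexity claims read off directly from the theorem. The two points you flag as needing care (identifying ``finite volume'' with finite Busemann volume, and continuity of $c$) are indeed the only substantive ones, and both resolve exactly as you suggest.
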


When $s\neq 0$ we can regard $\rho_s$ as the holonomy of a singular projective structure on $S^3$ with ``cone singularities.'' The singular locus of this structure is the figure-eight knot sitting inside $S^3$ (See \cite{DancigerThesis} for definitions). The properly convex projective structure on the figure-eight knot complement can be recovered from this singular projective structure by deleting the singular locus.

In \cite{Marquis12b} Marquis shows that there exist non-compact hyperbolic 3-manifolds that admit finite volume convex projective deformations. However, his examples are based on the ``bending'' construction of Johnson and Millson \cite{JohnsonMillson87}, which requires the presence of an embedded totally geodesic surface. Since the figure-eight knot complement does not contain any embedded totally geodesic surfaces, we see that the deformations in Theorem \ref{t:maintheorem} are not covered by Marquis' work. Furthermore, to the best of our knowledge these are the first examples of finite volume, properly convex deformations of a non-compact $3$-manifold where the holonomy is explicitly computed. 

The outline of the paper is as follows. Section \ref{s:background} provides background in projective geometry and properly convex projective structures. Section \ref{s:Z^2} defines and discusses certain Lie subgroups of $\PGL[4]{\R}$ that preserves a properly (but not strictly) convex domains. In Section \ref{s:obstruction} we show that the cusp shape of a finite volume hyperbolic 3-manifold places restrictions on deformations that can occur. Section \ref{s:volume} examines the volume of cusps. In Section \ref{s:fig8} we use results from the previous sections and \cite{CooperLongTillman13} to prove Theorem \ref{t:maintheorem}.

\section*{acknowledgements}
I would like to thank Daryl Cooper for first suggesting that the representations in \cite{Ballas12a} might correspond to properly convex projective structures and for several helpful conversations throughout this project. I would also like to acknowledge support from U.S. National Science Foundation grants DMS 1107452, 1107263, 1107367 ``RNMS: GEometric structures And Representation
varieties'' (the GEAR Network). I would also like to thank the referee for several helpful comments and suggestions including a simplified proof of Lemma \ref{l:liealgconjugacy} and for pointing out an incorrect corollary in a previous version.

\section{Background}\label{s:background}

\subsection{Convex Projective Geometry}

Let $\RP^n$ be $n$-dimensional real projective space, that is the quotient of the space $\R^{n+1}\bs \{0\}$ by the natural $\R^\times$ action given by scaling. Let $v\in \R^{n+1}\bs \{0\}$, then we denote by $[v]$ the image of $v$ under the natural quotient map. Let $\PGL[n+1]{\R}$ be the quotient of $\textrm{GL}_{n+1}(\R)$ by its center. It is easy to see that $\PGL[n+1]{\R}$ coincides with the set of self maps of $\RP^n$ induced by linear maps on $\R^{n+1}$. 

The image of a 2-dimensional vector subspace under the quotient map is called a \emph{projective line} and the image of an $n$-plane in $\RP^n$ is called a \emph{projective hyperplane}. The complement of a projective hyperplane in $\RP^n$ is called an \emph{affine patch}. This name comes from the fact that if $A$ is an affine patch, then after a projective coordinate change we can write $A$ in homogeneous coordinates as 
$$A=\{[x_1:\ldots:x_n:1]\mid (x_1,\ldots,x_n)\in \R^n\}.$$

An open subset $\Omega$ of $\RP^n$ is called \emph{convex} if it is contained in some affine patch (i.e. is disjoint from a projective hyperplane) and its intersection with every projective line is connected. If in addition its closure $\ol{\Omega}$ is contained in an affine patch then we say that $\Omega$ is \emph{properly convex}. An alternative characterization of proper convexity is that $\Omega$ does not contain a complete affine line. A point $p\in \partial \Omega$ is strictly convex if $p$ is not contained in the interior of an any line segment in $\partial \Omega$. If $\Omega$ is properly convex and every $p\in \partial \Omega$ is strictly convex then we say that $\Omega$ is \emph{strictly convex}. If $\Omega$ is a properly convex domain and $p\in \partial \Omega$ then $p$ is contained in a (possibly non-unique) hyperplane which is disjoint from $\Omega$. Such a plane is called a \emph{supporting hyperplane}. When $p$ is a $C^1$ point of $\partial \Omega$ there is a unique supporting hyperplane 
containing $p$ which can be identified 
with the tangent plane to $\partial\Omega$ at $p$.   

The space $\RP^n$ is double covered by the sphere $S^n$, which we realize as the quotient of $\R^{n+1}\bs\{0\}$ by the positive scalars. Let $\pi:S^n\to \RP^n$ be the covering map. The automorphisms of $S^n$ are identified with $\slpm[n+1]{\R}$, which
consists of linear transformations with determinant $\pm 1$.  Let $[T]\in \PGL[n+1]{\R}$ be an equivalence class of linear transformations. By scaling $T$ we can arrange that $T\in \slpm[n+1]{\R}$. Additionally, we see that $T\in \slpm[n+1]{\R}$ if and only if $-T\in \slpm[n+1]{\R}$.  As a result, there is a 2-to-1 map, which by abuse of notation we also call $\pi:\slpm[n+1]{\R} \to \PGL[n+1]{\R}$ given by $\pi(T)= [T]$.  If we let $\slpm{\Omega}$ and $\PGL{\Omega}$ be subsets of
$\slpm[n+1]{\R}$ and $\PGL[n+1]{\R}$ preserving $\pi^{-1}(\Omega)$ and $\Omega$, respectively then we see that $\pi$ restricts to a 2-to-1 map from $\slpm[]{\Omega}$ to $\PGL[]{\Omega}$.

When $\Omega$ is properly convex we can construct a section of $\pi$ that is a homomorphism as follows. Since $\Omega\subset \RP^n$ is properly convex, the preimage of $\Omega$ under $\pi$ will consist of two connected components. Every element of $\slpm[]{\Omega$} either preserves both of these components or interchanges them. Furthermore, $T\in \slpm[]{\Omega}$ preserves both components if and only if $-T$ interchanges them. As a result we see that if $[T]\in \PGL[]{\Omega}$ then there is a unique lift of $[T]$ to $\slpm[]{\Omega}$ that preserves both components of $\pi^{-1}(\Omega)$. Mapping $[T]$ to this lift yields the desired section. By using this section we are able to identify $\PGL[]{\Omega}$ with a subgroup of $\slpm[]{\Omega}$. As a result we will regard elements of $\PGL[]{\Omega}$ as linear transformations when convenient.

\subsection{Parabolic Coordinates}

The upper half space model of hyperbolic space offers a coordinate system that allows us to view hyperbolic space ``from a point at infinity,'' thus allowing us to simplify many arguments. In \cite{CooperLongTillman11} a projective analogue of these coordinates is introduced, and in this section we describe a generalization of these coordinates. 

\begin{figure}
 \begin{center}
  \includegraphics[scale=.5]{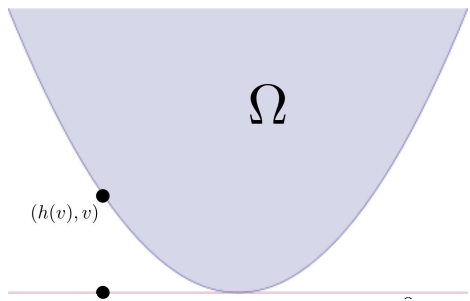}
  \caption{Parabolic coordinates for $\Omega$ \label{f:paraboliccoords}}
 \end{center}

\end{figure}

Let $\Omega$ be a properly convex domain, $p$ a point in $\partial \Omega$ and $H$ a supporting hyperplane containing $p$. The affine patch $\RP^n\bs H$ can be identified with $\R^n$ in such a way that lines through $p$ not contained in $H$ are parallel to the $x_1$ axis. This is done by applying a projective coordinate change that maps $p$ to $[e_1]$ and $H$ to the projective plane dual to $[e_{n+1}]$, where $e_i$ is the $i$th standard basis vector of $\R^{n+1}$. We call the $x_1$ direction the \emph{vertical direction}. A set of coordinates with this property is referred to as \emph{parabolic coordinates centered at $(H,p)$}, or just parabolic coordinates when $H$ and $p$ are clear from context (see Figure \ref{f:paraboliccoords}). It is common to choose parabolic coordinates such that the origin is taken to be a point in $\partial \Omega$ and the plane $H_0$ of points where $x_1=0$ is a supporting hyperplane to $\Omega$, but we will not insist that this is the case.   

Assume we have chosen parabolic coordinates centered at $(H,p)$. If $\gamma\in \PGL{\Omega}$ preserves both $p$ and $H$, then in these coordinates $\gamma$ will act as an affine map. Furthermore, $\gamma$ preserves the foliation of $\R^n$ by lines parallel to the $x_1$-axis. The space of these lines can be identified with $H_0$ via vertical projection and thus induces an affine action of $\gamma$ on $H_0$.

If $p$ is not contained in a line segment in $\partial \Omega\bs H$ then in these coordinates $\partial \Omega\bs H$ can be realized as the graph of a continuous convex function $h:U\to \R$, where $U$ is an open convex subset of $H_0$. Additionally, $\Omega\bs H$ can be identified with the epigraph of this function. For this reason we call $h$ a \emph{boundary function} of $\Omega$. Since $\partial \Omega$ is preserved by $\gamma$ we see that $h$ has certain equivariance properties. Specifically, let $q\in \partial \Omega\bs H$. Then there exists $v\in U$ such that $q=(h(v),v)$ and we see that $\gamma\cdot(h(v),v)=(h(\gamma\cdot v),\gamma\cdot v)$, where the action of $\gamma$ on $v$ is given by the affine action on $H_0$ described in the previous paragraph. 

Parabolic coordinates allow us to define algebraic horospheres as follows. Let $t>0$ and let $e_1$ be the $1$st standard basis vector. Define $\mathcal{S}_t$ as the translation of the portion of $\partial \Omega$ that does not contain any line segments through $p$ by the vector $t e_1$. These sets are called \emph{algebraic horospheres centered at $(p,H)$} (see \cite{CooperLongTillman11} for more details about algebraic horospheres.) Furthermore, if $T>0$ we define an \emph{algebraic horoball centered at $(p,H)$} to be $\bigcup_{t>T}\mathcal{S}_t$.

\subsection{Convex Projective Manifolds}

Let $M$ be an  $n$-manifold. A \emph{projective atlas} on $M$ is a collection of charts, $\phi_\alpha:U_\alpha\to \RP^n$, that cover $M$ with the property that if $U_\alpha$ and $U_\beta$ are charts with nontrivial intersection then $\phi_\alpha\circ \phi_\beta^{-1}$ is locally the restriction of an element of $\PGL[n+1]{\R}$. It can easily be shown that every projective atlas is contained in a unique maximal projective atlas and we call a maximal projective atlas on $M$ a \emph{projective structure} on $M$. A manifold equipped with a projective structure is called a \emph{projective manifold}. From this definition it is clear that a projective manifold is also a smooth manifold. A projective structure is a specific instance of a $(G,X)$ structure on $M$ (see \cite{Ratcliffe06} for an introduction to $(G,X)$ structures). 

If $M$ and $M'$ are projective manifolds of the same dimension, then a continuous map $f:M\to M'$ is \emph{projective} if for each chart $\phi:U\to \RP^n$ of $M$ and each chart $\psi:V\to \RP^n$ of $M'$ such $U$ and $f^{-1}(V)$ intersect the map 
$$\psi\circ f\circ \phi^{-1}:\phi(U\cap f^{-1}(V))\to \psi(f(U)\cap V)$$
is locally the restriction of a element of $\PGL[n+1]{\R}$. Such a map is necessarily smooth. 

The local data of a projective structure can be replaced with more global data as follows. By performing analytic continuation of a chosen chart, we obtain a local diffeomorphism $D:\tilde M\to \RP^n$, where $\tilde M$ is the universal cover of $M$, called a \emph{developing map}. We can also construct a representation $\rho:\fund{M}\to \PGL[n+1]{\R}$  called the \emph{holonomy}. The holonomy representation is equivariant with respect to the developing map in the following way: for $\gamma\in \fund{M}$ and $x\in \tilde M$ 
$$D(\gamma x)=\rho(\gamma)D(x),$$ 
where $\gamma$ acts on $\tilde M$ by deck transformation and $\rho(\gamma)$ acts on $\RP^n$ by projective automorphism. The pair $(D,\rho)$ is called a \emph{developing/holonomy pair}.  While a projective structure does not uniquely determine the pair $(D,\rho)$, but if $(D',\rho')$ is another developing/holonomy pair for the same structure then there exists $g\in \PGL[n+1]{\R}$ such that $D'=gD$ and $\rho'=g\rho g^{-1}$. 

Suppose that we are given a projective structure on $M$ with developing/holonomy pair $(D,\rho)$. If $D$ is a diffeomorphism onto a convex subset, $\Omega$, of $\RP^n$ then we say that the projective structure is \emph{convex}. If in addition, $\Omega$ is properly (resp.\ strictly) convex then we say that the structure is \emph{properly (resp.\ strictly) convex}.  When a projective structure is convex $\rho$ is a discrete and faithful representation and $M$ can be identified with $\Omega/\rho(\fund{M})$. 

A fixed manifold often admits many different projective structures and we would like a space that coherently organizes these structures. Let $N$ be a manifold and suppose that $N$ is either closed or is the interior of a compact manifold with boundary. A \emph{marked projective structure} on $N$ is a pair $(M,f)$, where $M$ is a projective manifold along with a diffeomorphism $f:N\to M$ called a \emph{marking}. Two markings $(M,f)$ and $(M',f')$ are \emph{isotopic} if there exists a projective bijection, $h$, \emph{that is defined on the complement of a collar neighborhood of $\partial M$ onto the complement of a collar neighborhood of $\partial M'$} such that the following diagram commutes up to isotopy. 

$$
\xymatrix{
& M\ar[dd]^h\\
N\ar[ur]^f\ar[dr]_{f'} &\\
& M'
}
$$

Let $\RP(N)$ be the set of isotopy classes of marked projective structures on $N$.  The space $\RP(N)$ can be realized as the quotient of the space of isotopy classes of developing maps by the action of $\PGL[n+1]{\R}$ by post composition. We can thus topologize $\RP(N)$ using the smooth compact open topology on the set of functions from $\tilde N$ to $\RP^n$. 

Let $\rvar[{\PGL[n+1]{\R}}]{\fund{N}}$ be the set of representations from $\fund{N}$ into $\PGL[n+1]{\R}$. We topologize $\rvar[{\PGL[n+1]{\R}}]{\fund{N}}$ using the compact open topology (this coincides with the topology of pointwise convergence on the images of a fixed generating set). Let $\charvar[{\PGL[n+1]{\R}}]{\fund{N}}$ be the quotient of $\rvar[{\PGL[n+1]{\R}}]{\fund{N}}$ by the action of $\PGL[n+1]{\R}$ by conjugation. We topologize $\charvar[{\PGL[n+1]{\R}}]{\fund{N}}$ using the quotient topology.

We can define a map 
\begin{equation}\label{e:holonomy}
hol:\RP(N)\to \charvar[{\PGL[n+1]{\R}}]{\fund{N}}
\end{equation}
 by $hol([(M,f)])=[\rho_M\circ f_\ast]$, where $\rho_M$ is a holonomy for the projective manifold $M$. The following theorem was originally stated by Thurston \cite[Sect.\ 5.1]{ThurstonNotes} and detailed proofs can be found in \cite{Goldman88b} and \cite{CanaryEpsteinGreen87}.
 
\begin{theorem}[Thurston \cite{ThurstonNotes}]\label{t:thurstonholonomy}
Let $N$ be the interior of a compact smooth manifold. Then $hol:\RP(N)\to\charvar[{\PGL[n+1]{\R}}]{\fund{N}}$ is a local homeomorphism. 
\end{theorem}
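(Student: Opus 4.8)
The plan is to establish Theorem \ref{t:thurstonholonomy} by showing that the holonomy map $hol$ is both a local injection and a local surjection onto $\charvar[{\PGL[n+1]{\R}}]{\fund{N}}$, which together with continuity will give the local homeomorphism. The essential idea, following Thurston's ``holonomy principle,'' is that a projective structure is a $(G,X)$-structure (with $G=\PGL[n+1]{\R}$, $X=\RP^n$), and any small deformation of the holonomy representation can be realized by a small deformation of the developing map, uniquely up to isotopy. I would first set up the space of marked structures via developing maps: a marked projective structure on $N$ (the interior of a compact manifold) is encoded by a developing/holonomy pair $(D,\rho)$ with $D$ a local diffeomorphism $\tilde N\to\RP^n$ equivariant with respect to $\rho$, and two pairs give the same point of $\RP(N)$ when they differ by post-composition with an element of $\PGL[n+1]{\R}$ together with an isotopy. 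Since $\RP(N)$ carries the smooth compact-open topology on developing maps (modulo $\PGL[n+1]{\R}$), continuity of $hol$ is essentially built into the definitions.

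\emph{Local surjectivity (the main step).} Fix a structure with pair $(D_0,\rho_0)$ and a representation $\rho$ near $\rho_0$. Because $N$ is the interior of a compact manifold, I would work with a compact core $K\subset N$ and a fixed triangulation (or a finite cover by evenly-charted cells) so that the developing map is determined by finitely many pieces of data. The strategy is to modify $D_0$ cell-by-cell over the compact core: on each simplex one interpolates between the old chart and the $\rho$-equivariant continuation, using that $\PGL[n+1]{\R}$ acts on $\RP^n$ and that $\rho$ is close to $\rho_0$ so the interpolated map remains a local diffeomorphism by an open-condition/implicit-function argument. Patching these local adjustments together equivariantly over $\tilde K$ produces a developing map $D$ with holonomy $\rho$; since $N$ deformation retracts onto $K$, this determines the structure on all of $N$. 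The output pair $(D,\rho)$ depends continuously on $\rho$, giving a continuous local section of $hol$.

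\emph{Local injectivity.} Conversely, if two nearby structures $(D_1,\rho)$ and $(D_2,\rho)$ have the same holonomy $\rho$ (after conjugation), I would show they are isotopic. Both developing maps are $\rho$-equivariant local diffeomorphisms agreeing with $D_0$ to first order; equivariance forces them to agree on the level of holonomy, and a straight-line homotopy $D_t=(1-t)D_1+tD_2$ (carried out in a chart where this makes sense, or via the convexity of $\RP^n$ away from a hyperplane) stays through equivariant local diffeomorphisms when $D_1,D_2$ are sufficiently close. This homotopy descends to an isotopy of the structures, so the two marked structures coincide in $\RP(N)$. Combining a continuous local inverse with injectivity yields the local homeomorphism.

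\textbf{The hard part} will be the surjectivity step, and specifically the non-compactness of $N$: the patching argument is clean over the compact core $K$, but one must ensure that the deformed developing map extends coherently and equivariantly over the ends of $\tilde N$ and that the resulting object is genuinely a developing map for a structure on all of $N$ (not merely on $K$). This is precisely where the hypothesis that $N$ is the \emph{interior of a compact manifold} is used, allowing one to retract to $K$ and control the ends; handling the equivariance over the full deck group $\fund{N}$ rather than $\fund{K}$, and verifying that ``local diffeomorphism'' is preserved uniformly near the ends, is the delicate technical point. I would lean on the references \cite{Goldman88b} and \cite{CanaryEpsteinGreen87}, where these $(G,X)$-structure deformation arguments are carried out in full, rather than reproving the entire machinery here.
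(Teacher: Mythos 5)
Note first that the paper contains no proof of this statement: Theorem \ref{t:thurstonholonomy} is quoted from Thurston \cite{ThurstonNotes}, with the remark that detailed proofs appear in \cite{Goldman88b} and \cite{CanaryEpsteinGreen87}. So there is no internal argument to measure your sketch against; what you have written is the standard Ehresmann--Thurston deformation argument, which is exactly what those references carry out, and your closing deferral to them leaves you in essentially the same position as the paper itself. Your outline of local surjectivity (deform the developing map equivariantly over a compact core, using that being a local diffeomorphism is an open condition) is the right shape of argument and matches the sources the paper points to.

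There is, however, one genuine soft spot, and it is precisely the point the paper's definitions are engineered to handle. For an open manifold $N$, two structures with the same holonomy that agree closely on a compact core can be genuinely different near the ends, and no argument can produce a projective bijection defined on all of $N$; with the naive notion of isotopy, local injectivity of $hol$ is simply false. This is why the paper defines two markings to be isotopic via a projective bijection defined only on the \emph{complement of a collar neighborhood of the boundary}: with that relaxed equivalence, an identification over a compact core is all one needs, and it is all your argument can produce. Your ``hard part'' paragraph flags the ends as the difficulty but frames the fix as extending the deformed developing map ``coherently and equivariantly over the ends''; the actual resolution is the opposite --- one abandons control over the ends entirely and builds that into the equivalence relation defining $\RP(N)$, so that the map to $\charvar[{\PGL[n+1]{\R}}]{\fund{N}}$ becomes locally injective. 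Relatedly, the straight-line homotopy $D_t=(1-t)D_1+tD_2$ is problematic even where it makes sense in an affine patch: the deck group acts by projective, not affine, transformations of that patch, so the interpolation of two $\rho$-equivariant maps is generally not $\rho$-equivariant. The references avoid this by phrasing injectivity in terms of sections of the flat $(G,X)$-bundle over the compact core (or via $D_2\circ D_1^{-1}$ chart by chart). None of this invalidates your overall plan, but as written the conclusion ``the two marked structures coincide in $\RP(N)$'' is only true because of the collar-neighborhood definition of isotopy, which your argument never invokes.
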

As a consequence we see that elements of $\RP(N)$ can be locally parameterized by $\charvar[{\PGL[n+1]{\R}}]{\fund{N}}$. Let $\gold{N}$ be the set of isotopy classes that contain a properly convex representative. That is $[(M,f)]\in \gold{N}$ if there is $(M',f')\in [(M,f)]$ such that $M'$ is a properly convex projective manifold. The behavior of $\gold{N}$ as a subset of $\RP(N)$ depends on whether or not $N$ is closed. When $N$ is closed, work of Koszul \cite{Koszul68} shows that $\gold{N}$ is an open subset of $\RP(N)$. Furthermore, Benoist has shown in \cite{Benoist05} that when $N$ is closed $\gold{N}$ is a closed subset of $\RP(N)$. However, when $N$ is non-compact there exist, in general, sequences of non-discrete representations from $\fund{N}$ into $\PGL[n+1]{\R}$ that converge to the holonomy of a properly convex projective structure on $N$. As a result we see that $\gold{N}$ is not generally an open subset of $\RP(N)$.  

However, recent work of Cooper, Long, and Tillmann has given a sufficient condition for a small deformation of the holonomy of a properly convex projective structure to continue to be the holonomy of a properly convex projective structure. Their condition is phrased in terms of the ends of the manifold having the structure of \emph{generalized cusps}, which are generalizations of cusps of hyperbolic manifolds and are defined in Section \ref{s:Z^2}. Specifically, they prove the following theorem.

\begin{theorem}[{\cite[Thm 0.1]{CooperLongTillman13}}]\label{t:holonomytheorem}
Let $M$ be a properly convex projective manifold with strictly convex boundary and holonomy $\rho$. Suppose that $M=A\cup \mathcal{B}$ where $A$ is compact and $\partial A=A\cap \mathcal{B}=\partial \mathcal{B}$, and each component of $B$ of $\mathcal{B}$ is a generalized cusp. If $\rho'$ is sufficiently close to $\rho$ in $\rvar[{\PGL[n+1]{\R}}]{\fund{M}}$ and for each component $B\subset \mathcal{B}$ there is a convex projective structure on $B$ which is a generalized cusp with holonomy $\rho'\vert_{\fund{B}}$ then there is a properly convex projective structure on $M$ with holonomy $\rho'$. 
\end{theorem}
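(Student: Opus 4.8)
\section*{Proof proposal}

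The plan is to build the desired properly convex structure by deforming the given one piece by piece and then checking that proper convexity survives the reassembly. Since $\rho'$ is assumed close to $\rho$, the first step is to invoke Thurston's holonomy theorem (Theorem \ref{t:thurstonholonomy}): as $hol$ is a local homeomorphism onto a neighborhood of $[\rho]$ in $\charvar[{\PGL[n+1]{\R}}]{\fund{M}}$, there is a (not necessarily convex) projective structure $M'$ on $M$ with holonomy $\rho'$, together with a developing map $D'$ that is close to the original developing map $D$ in the compact-open topology. The problem is thus reduced to promoting $M'$ to a properly convex structure, and the natural way to organize this is to treat the compact core $A$ and the ends $\mathcal{B}$ separately before gluing.

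For the core I would argue that convexity is stable. The original developing map $D$ restricts to an embedding of $\tilde A$ onto a properly convex region with strictly convex boundary, and both ``is an embedding'' and ``has strictly convex boundary'' are open conditions over the compact quotient $\tilde A/\fund{A}$; this is the analogue for compact manifolds-with-boundary of Koszul's openness result \cite{Koszul68}, the strict convexity of $\partial A$ playing the role that compactness plays in the closed case. Consequently, for $\rho'$ sufficiently close to $\rho$ the restriction of $M'$ to the core remains a properly convex structure with strictly convex boundary, and since $D'$ is $C^1$-close to $D$ there, the developed core $D'(\tilde A)$ is a small convex perturbation of $D(\tilde A)$.

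The ends are supplied by hypothesis and then attached. For each component $B$ of $\mathcal{B}$ we are handed a generalized cusp structure with holonomy $\rho'\vert_{\fund{B}}$; by the structure theory of generalized cusps (Section \ref{s:Z^2}) this develops to a convex region modeled on an algebraic horoball centered at a boundary point $(p,H)$ in the sense of the parabolic-coordinate discussion above. Because the holonomy of this cusp structure and that of the deformed core agree on the peripheral subgroup $\fund{\partial B}$, and because a projective structure is locally determined by its holonomy (again via Theorem \ref{t:thurstonholonomy}), I can isotope the two structures so that they coincide on a collar of $\partial B$, thereby gluing the convex cusps to the convex core and producing a single projective structure on all of $M$ with holonomy $\rho'$. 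It then remains to verify that the developed image $\Omega'=D'(\tilde M)$ is properly convex: one must check both that $D'$ is a homeomorphism onto a convex set and that $\Omega'$ contains no complete affine line.

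The main obstacle is precisely this last verification, since convexity is not preserved under arbitrary gluing of convex pieces. The universal cover develops as the union of $\rho'$-translates of the convex core $D'(\tilde A)$ together with the convex horoball regions coming from the cusps, and one must rule out that the assembly develops non-injectively or acquires a non-convex ``dimple'' along a gluing locus. The way to control this is to combine the precise geometry of the generalized cusps with a supporting-hyperplane analysis at each component of $\partial \mathcal{B}$: the strictly convex boundary of the core guarantees a genuine separating supporting hyperplane at the interface, so that the core lies to one side and the attached horoball to the other, and the union inherits convexity seam by seam. Properness then follows because each algebraic horoball lies in a properly convex domain and the strict convexity of the core's boundary prevents any line segment from escaping to infinity along a cusp. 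Assembling these observations shows $\Omega'$ is properly convex, so $M\cong \Omega'/\rho'(\fund{M})$ carries the required structure.
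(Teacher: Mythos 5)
You should first note that the paper does not prove this statement at all: it is quoted verbatim as \cite[Thm 0.1]{CooperLongTillman13} and used as a black box (in the proof of Proposition \ref{p:convexstructure}), so there is no in-paper proof to compare against. Your proposal is therefore an attempt to reconstruct Cooper--Long--Tillmann's theorem itself, and as it stands it has genuine gaps at exactly the points where that theorem is hard.

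The fatal step is the gluing. You assert that ``a projective structure is locally determined by its holonomy (again via Theorem \ref{t:thurstonholonomy})'' and that you may therefore isotope the supplied generalized-cusp structure on $B$ and the deformed core structure so that they agree on a collar of $\partial B$. That is a misreading of Thurston's theorem: $hol$ being a local homeomorphism says nearby \emph{isotopy classes} of structures have nearby holonomies and conversely; it does not say that two structures inducing the same representation of $\fund{\partial B}$ agree, or can be matched, on a collar. The developing maps of the cusp piece and of the core piece are a priori unrelated along the interface, and arranging compatibility there is real work. Your second key claim --- that a supporting hyperplane at the interface lets the union ``inherit convexity seam by seam'' --- also fails as stated: two convex sets lying on opposite sides of a common supporting hyperplane and meeting along a face need not have convex union (two triangles in the plane sharing part of an edge, with bases of different lengths, already give a counterexample), and even if each seam were fine locally, convexity and injectivity of the developing map are global statements about all $\rho'(\fund{M})$-translates of the pieces simultaneously. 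Controlling this global assembly --- not the local picture --- is the actual content of the Cooper--Long--Tillmann proof. Finally, your ``stability of convexity for the compact core'' is itself a nontrivial relative Koszul-type theorem (again due to Cooper--Long--Tillmann in earlier work), which you invoke by analogy rather than prove; citing it is legitimate, but it cannot be described as an elementary openness observation.
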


\subsection{Hilbert Metric and Busemann Volume}

Every properly convex subset of $\RP^n$ comes equipped with a \emph{Hilbert metric} that is invariant under $\PGL{\Omega}$ and is defined as follows: Let $\Omega$ be an open properly convex domain in $\RP^n$ and let $x,y$ be distinct points in $\Omega$. Since $\Omega$ is properly convex, it follows that the line segment connecting $x$ to $y$ intersects $\partial \Omega$ in exactly two points, $a$ and $b$, such that $a$ is closer to $x$ and $b$ is closer to $y$ (see Figure \ref{f:hilbertmetric}). We define the Hilbert distance between $x$ and $y$ as $\dhilb{x}{y}:=\log\left([a:x:y:b]\right)$. Here $[a:x:y:b]=\frac{\abs{y-a}\abs{x-b}}{\abs{y-b}\abs{x-a}}$ is the projective cross ratio and $\abs{\cdot}$ is the Euclidean norm on any affine patch containing $\Omega$. When $x=y$ we define $\dhilb{x}{y}=0$. A proof that $d_\Omega$ is a metric can be found in \cite{delaHarpe93}. Since the cross ratio is invariant under projective automorphisms it is clear that the Hilbert metric is invariant under $\PGL{\Omega}$. 
When $\Omega$ is an ellipsoid the Hilbert metric coincides with twice the hyperbolic metric on the Klein model of hyperbolic space.

\begin{figure}
 \begin{center}
  \includegraphics[scale=.6]{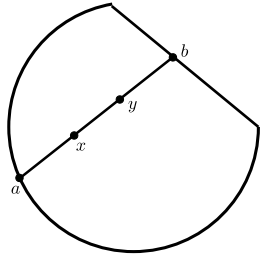}
  \caption{The line segment defining the Hilbert metric \label{f:hilbertmetric}}
 \end{center}

\end{figure}

We now define a $\PGL{\Omega}$-invariant measure on $\Omega$. We first recall the definition of \emph{Hausdorff measure} for an arbitrary metric space. Let $(X,d)$ be a metric space and let $r$ be a non-negative real number. Let $E\subset X$ and let $S^\varep_E$ be the set of all countable covers of $E$ by sets of diameter less than $\varep$. We define 
$$m_\varep^r(E)=\inf_{\{E_i\}\in S^\varep_E}c_r\sum_{i}diam(E_i)^r,$$
for some constant $c_r$, and we let 
$m^r(E)=\lim_{\varep \to 0}m_\varep^r(E).$ This construction defines an outer measure on $X$ which when restricted to the $\sigma$-algebra of Borel sets gives the \emph{$r$-dimensional Hausdorff measure on $X$}. 

 We can use the Hilbert metric on $\Omega$ to define the $n$-dimensional Hausdorff measure on Borel subsets of $\Omega$. We normalize so that $c_n=\alpha_n$, where $\alpha_n$ is the Lebesgue measure of the Euclidean ball of diameter 1. We denote this measure by $\mu_{\Omega}$ and call it the \emph{Busemann volume on $\Omega$}. Since this measure is defined using only the Hilbert metric it is easy to see that it is invariant under $\PGL{\Omega}$. 
 
 The above definition of Busemann volume is not conducive to performing computations and so we now recast the definition in a way that is more computationally convenient (compare Marquis \cite{Marquis13}). Let $\mathcal{A}$ be an affine patch containing $\Omega$ and we equip $\mathcal{A}$ with the Euclidean metric $\abs{\cdot}$. We can use the Hilbert metric to define a Finsler structure on $\Omega$ as follows. We can identify $T_x\Omega$ with $\mathcal{A}$ and if $v\in T_x\Omega$ we define 
\begin{equation}\label{e:hilbertnorm}
\Abs{v}_\Omega=\left.\frac{d}{dt}\right|_{t=0}\dhilb{x}{x+tv}=\abs{v}\left(\frac{1}{\abs{x-p_{-}}}+\frac{1}{\abs{x-p_{+}}}\right),
\end{equation}
where $p_{+}$ and $p_{-}$ are the intersection points of $\partial \Omega$ and the line in $\Omega$ through $x$ determined by $v$. Using this norm we can define the $n$-dimensional Hausdorff measure on $T_x\Omega$. We normalize this measure by choosing the constant $c_n=\alpha_n$, and denote it by $\mu^x_{\Omega}$. 

In order to facilitate computations we define another Finsler structure on $\Omega$. In this Finsler structure the norm on each tangent space is given by the Euclidean norm on $\mathcal{A}$. Similarly, we can define a measure on each tangent space using the Lebesgue measure, $\mu_L$. The identity map on $\Omega$ gives a map between these two Finsler manifolds. Let $B\subset \Omega$ be a Borel set, then by the ``change of variables formula'' \cite[Prop 5.5.11]{BuragoBuragoIvanov01} we see that there exists $f:\Omega\to \R$ such that 
\begin{equation}\label{e:volform1}
\hilbvol{B}=\int_Bf(x)d\mu_L(x),
\end{equation}
furthermore, the function $f$ can be easily described. The measures $\mu_L$ and $\mu^x_\Omega$ are both translation invariant measures and therefore differ by multiplication by a constant. The function $f(x)$ is this constant. Let $B_x^\Omega(1)$ be the unit ball in $T_x\Omega$ for the norm $\Abs{\cdot}_{\Omega}$. A simple computation shows that $f(x)=\frac{\mu^x_\Omega(B_x^\Omega(1))}{\mu_L(B_x^\Omega(1))}$. By work of Busemann \cite{Busemann47} we see that $\mu^x_\Omega(B_x^\Omega(1))=\alpha_n$, and we see that \eqref{e:volform1} becomes 
\begin{equation}\label{e:hilbertvolume}
\hilbvol{B}=\int_B\frac{\alpha_n}{\mu_L(B_x^\Omega(1))}d\mu_L(x).
\end{equation}

The Busemann volume is $\PGL{\Omega}$-invariant and thus descends to a Borel measure on any properly convex manifold $M=\Omega/\Gamma$, where $\Gamma$ is a discrete torsion-free subgroup of $\PGL{\Omega}$. We refer to this measure as the \emph{Busemann volume on $M$} and denote it $\mu_M$.

We close this section by mentioning some comparison properties of the Hilbert metric and Busemann volume that we will use throughout. Let $\Omega\subset \Omega'$ be two properly convex domains. If $x,y\in \Omega$ then a simple computation shows that $\dhilb[\Omega']{x}{y}\leq \dhilb[\Omega]{x}{y}$. Similarly, if $A\subset \Omega$ is a Borel subset then $\hilbvol[\Omega']{A}\leq \hilbvol[\Omega]{A}$.

\section{Properly Convex Cusps and Representations of $\Z^2$}\label{s:Z^2}

In this section we construct properly convex cusps that can be realized as small deformations of standard cusps coming from the complete hyperbolic structure on a cusped 3-manifold. To begin with let $\mathfrak{L}_0$ be the Abelian Lie subalgebra of $\mathfrak{gl}_4$ consisting of matrices of the form 
$$\begin{pmatrix}
   0 & r & s & 0\\
   0 & 0 & 0 & r\\
   0 & 0 & 0 & s\\
   0 & 0 & 0 & 0
  \end{pmatrix}
$$
If we exponentiate $\mathfrak{L}_0$ we get a Lie subgroup $L_0$ of $\GL[4]{\R}$ consisting of matrices of the form  
$$\begin{pmatrix}
   1 & r & s & \frac{1}{2}(r^2+s^2)\\
   0 & 1 & 0 & r\\
   0 & 0 & 1 & s\\
   0 & 0 & 0 & 1
  \end{pmatrix}
$$
The elements of $L_0$ preserve the 3-dimensional affine subspace ${\{(x_1,x_2,x_3,x_4)\in \R^4\vert x_4=1\}}$ and can thus be regarded as affine transformations of $\R^3$. Furthermore, by projectivizing we get an embedded copy of $L_0$ in $\pgl$, which we also call $L_0$.

Let $D_0=\{(x_1,x_2,x_3)\vert x_1>\frac{1}{2}(x_2^2+x_3^2)\}\subset \R^3$, then we see that the affine action of $L_0$ preserves $D_0$. Furthermore, if we let $c\geq 0$ and let $\mathcal{H}^0_c$ be the $L_0$-orbit of $(c,0,0)$ then we get a foliation of $D_0$. Explicitly, $\mathcal{H}^0_c$ is the graph of the strictly convex function $f_c(x_2,x_3)=\frac{1}{2}(x_2^2+x_3^2)+c$ and we can think of $D_0$ as the epigraph of $f_0$. We can now realize $D_0$ as a subset of $\RP^3$ via the embedding $(x_1,x_2,x_3)\mapsto[x_1:x_2:x_3:1]$ of $\R^3$ into $\RP^3$. In this way we realize parabolic coordinates for $D_0$ in which $f_0$ is a boundary function. It is easy to see that $D_0$ is the copy of $\HH^3$ obtained by projectivizing the negative cone of the quadratic form 

$$Q(x_1,x_2,x_3,x_4)=\frac{1}{2}(x_2^2+x_3^2)-x_1x_4.$$

With this point of view we see that the foliation $\mathcal{H}^0_c$ is just the foliation by horospheres centered at the point $[1:0:0:0]$ (which we think of as the point at $\infty$). 

If we let $\Gamma_0$ be a  lattice inside $L_0$ then $D_0/\Gamma_0$ is a properly (in fact strictly) convex manifold diffeomorphic to $T^2\times(0,\infty)$ via a diffeomorphism that maps $\mathcal{H}^0_c/\Gamma_0$ to $T^2\times \{c\}$. Furthermore, each $T^2\times \{c\}$ is equipped with the Euclidean similarity structure whose developing map is given by the restriction of the projection $[x_1:x_2:x_3:1]\mapsto [x_2:x_3:1]$ to $\mathcal{H}^0_c$. 

We now recall the construction, in dimension 3, of generalized cusps from \cite{CooperLongTillman13}. Let $\Phi_t$ be the affine flow on $\R^3$ given by the affine transformation
$$\Phi_t=\begin{pmatrix}
          1 & 0 & 0 & t\\
          0 & 1 & 0 & 0\\
          0 & 0 & 1 & 0\\
          0 & 0 & 0 & 1
         \end{pmatrix}
$$
Let $M$ be a convex projective $3$-manifold, then there exists a convex open $\Omega\subset \RP^3$ and a discrete group $\Gamma\subset \PGL{\Omega}$ such that $M\cong \Omega/\Gamma$. We say that $M$ is a \emph{complete generalized cusp} if $\Gamma$ is conjugate in $\pgl$ into the centralizer of $\Phi_t$.

A \emph{local hyperplane} in $M$ is an evenly covered subset (with respect to the universal cover $\Omega$) such that some (hence any) lift to $\Omega$ develops into a convex subset of an affine 2-plane in $\R^3$. Let $W$ be 2-dimensional submanifold of $M$. A point $p\in W$ is called \emph{strictly convex} if there exists a local hyperplane $P$ such that $P\cap W=p$. If every point of $W$ is strictly convex then we say that $W$ is \emph{strictly convex}. 

Let $B$ be an open properly convex submanifold of $M$ such that $\partial B$ is strictly convex. Suppose also that there is an identification of $B$ with $\partial B\times [0,\infty)$ such that some (hence any) lift of $\{b\}\times [0,\infty)$ to $\Omega$ develops into a flowline of $\Phi_t$ for every $b\in \partial B$. If $B$ has these properties then $B$ is called \emph{generalized cusp}.  

Since $\Gamma_0$ is contained in the centralizer of $\Phi_t$ we see that $\R^3/\Gamma_0$ is a complete generalized cusp. The properly convex submanifold $D_0/\Gamma_0$ has strictly convex boundary and admits a product structure given by flowlines of $\Phi_t$ and is thus a \emph{generalized cusp}

We now repeat the above construction while allowing ourselves to vary the initial Lie algebra. Let $\mathfrak{L}_t$ be the Abelian Lie subalgebra of $\mathfrak{gl}_4$ consisting of matrices of the form
$$\begin{pmatrix}
   0 & r & s & 0\\
   0 & t r & 0 & r\\
   0 & 0 & 0 & s\\
   0 & 0 & 0 & 0
  \end{pmatrix}
$$
If we exponentiate $\mathfrak{L}_t$ then we obtain the Lie subgroup, $L_t\leq \GL[4]{\R}$ of affine transformations of the form 
$$\begin{pmatrix}
   1 & \frac{e^{t r}-1}{t} & s & \frac{e^{tr}-t r-1}{t^2}+\frac{s^2}{2}\\
   0 & e^{t r} & 0 & \frac{e^{t r}-1}{t}\\
   0 & 0 & 1 & s\\
   0 & 0 & 0 & 1
  \end{pmatrix}
$$
By projectivizing we obtain an embedded copy of $L_t$ inside $\pgl$, which we also call $L_t$.

Using parabolic coordinates, we can again regard the $L_t$-orbit of $(0,0,0)$ as the graph of a strictly convex function and let $D_t$ be the epigraph of this function. As above, we can regard $D_t$ as a domain in $\RP^3$. The following lemma demonstrates some convexity properties of $D_t$ in this context. 
 
\begin{lemma}\label{l:propconvex}
 For each $t>0$, $D_t$ is a properly, but not strictly convex subset of $\RP^3$. 
\end{lemma}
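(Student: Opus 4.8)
The plan is to introduce explicit parabolic coordinates realizing $D_t$ as the epigraph of a convex function, to deduce convexity and proper convexity from elementary estimates on that function, and to establish the failure of strict convexity by exhibiting a genuine line segment in $\partial D_t$ lying on the plane at infinity. Concretely, I would first apply the displayed matrix $L_t$ to $(0,0,0,1)$ to obtain, as $(r,s)$ range over $\R^2$, the surface
\[
\left(\frac{e^{tr}-tr-1}{t^2}+\frac{s^2}{2},\ \frac{e^{tr}-1}{t},\ s,\ 1\right).
\]
Setting $x_2=(e^{tr}-1)/t$ (so that $tr=\log(1+tx_2)$, valid precisely for $x_2>-1/t$) and $x_3=s$ eliminates the parameters and realizes the orbit as the graph of
\[
h_t(x_2,x_3)=\frac{tx_2-\log(1+tx_2)}{t^2}+\frac{x_3^2}{2},\qquad x_2>-\tfrac1t .
\]
Its Hessian is $\mathrm{diag}((1+tx_2)^{-2},1)$, which is positive definite on $\{x_2>-1/t\}$, so $h_t$ is strictly convex there and $D_t$, being the epigraph of $h_t$ inside the affine patch $\{x_4=1\}$, is a convex subset of $\RP^3$.

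For proper convexity I would invoke the criterion from Section~\ref{s:background} that a convex domain is properly convex exactly when it contains no complete affine line. Since $h_t\to+\infty$ as $x_2\to(-1/t)^+$, as $x_2\to+\infty$, and as $|x_3|\to\infty$, any affine line with nonzero $(x_2,x_3)$-direction eventually leaves the epigraph, while a line in the vertical $x_1$-direction escapes downward through the graph. Thus $D_t$ contains no affine line and is properly convex.

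To see that $D_t$ is \emph{not} strictly convex I would produce a segment in its boundary. For $0\le a\le t$ consider the points $(n,an,0)$. Here $h_t(an,0)=\tfrac{an}{t}-\tfrac{\log(1+tan)}{t^2}$, and since $a\le t$ forces $an/t\le n$ while $-\log(1+tan)/t^2<0$ for large $n$, we get $n>h_t(an,0)$; hence $(n,an,0)\in D_t$ for all large $n$, and $[n:an:0:1]=[1:a:0:1/n]\to[1:a:0:0]$. Therefore the whole segment $\sigma=\{[1:a:0:0]:0\le a\le t\}$ lies in $\overline{D_t}$, and because these points lie on $\{x_4=0\}$ while $D_t\subset\{x_4\neq0\}$, they lie in $\partial D_t$. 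As $t>0$, $\sigma$ is a nondegenerate line segment contained in $\partial D_t$, so its interior points are not strictly convex and $D_t$ is not strictly convex.

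The step I expect to be the main obstacle is this last one: correctly identifying the flat piece at infinity. One must verify that the projective limits of $D_t$ along the $x_1$-axis fill out exactly the segment joining $[1:0:0:0]$ to the attracting fixed point $[1:t:0:0]$ of $L_t$, neither collapsing to a single point (as happens for the strictly convex model $D_0$) nor producing a larger flat. This is precisely where the hypothesis $t>0$ enters, through the nontrivial eigenvalue $e^{tr}\neq1$ of the non-unipotent part of $L_t$, and it is what geometrically distinguishes $D_t$ from $D_0$.
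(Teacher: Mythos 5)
Your proof is correct and follows essentially the same route as the paper's: realize the domain as the epigraph of an explicitly computed convex function, get convexity from the Hessian, rule out complete affine lines to obtain proper convexity, and exhibit a nondegenerate line segment in the boundary at infinity to kill strict convexity. The only difference is cosmetic: the paper first conjugates $\mathfrak{L}_t$ by the explicit matrix $V_t$ to the $t$-independent algebra $\mathfrak{L}'$ and runs the identical argument on $D'$ with boundary function $\frac{x_3^2}{2}-\log(x_2)$, whereas you work directly in the $D_t$ coordinates with $h_t$; the two computations correspond under exactly that change of coordinates.
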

\begin{proof}
 Let $\mathfrak{L}'$ be the Abelian Lie algebra of matrices of the form 
 \begin{equation}
\begin{pmatrix}
    0 & 0 & b & -a\\
    0 & a & 0 & 0\\
    0 & 0 & 0 & b\\
    0 & 0 & 0 & 0
   \end{pmatrix}\label{e:liealg}
\end{equation}
which exponentiates to the Lie group, $L'$ of matrices of the form
\begin{equation}\label{e:liegroup1}
 \begin{pmatrix}
  1 & 0 & b & \frac{1}{2}b^2-a\\
  0 & e^a & 0 & 0\\
  0 & 0 & 1 & b\\
  0 & 0 & 0 & 1
 \end{pmatrix}
\end{equation}
Next, let $D'$ be the epigraph of the $L'$-orbit of the point $(0,1,0)$ (see Figure \ref{f:domain1}).

\begin{figure}
 \begin{center}
  \includegraphics[scale=.7]{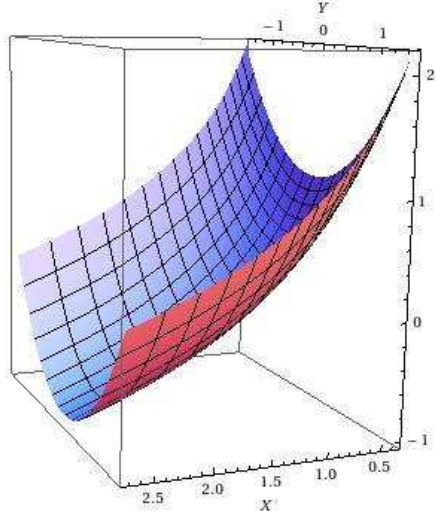}
  \caption{The boundary of $D'$\label{f:domain1}}
 \end{center}
\end{figure}

The affine transformation
\begin{equation}\label{e:Vt}
V_t=\begin{pmatrix}
       1/t^2 & 1/t^2 & 0 & -1/t^2\\
       0 & 1/t & 0 & -1/t\\
       0 & 0 & 1/t & 0\\
       0 & 0 & 0 & 1
      \end{pmatrix}
\end{equation}
conjugates $\mathfrak{L}_t$ to $\mathfrak{L}'$ (under this conjugation, $a=tr$ and $b=ts$). Since $V_t\cdot[0:1:0:1]=[0:0:0:1]$ we see that $V_t^{-1}D_t=D'$. Thus it suffices to show that $D'$ is properly, but not strictly convex. 

To see this observe that the $L'$-orbit of $(0,1,0)$ is the set 
$$\left\{\left(\frac{b^2}{2}-a,e^{a},b\right)\vert a,b\in \R\right\}.$$
Under the coordinate change $A=e^{a}$ and $B=b$, this orbit can be written as 
$$\left\{\left(\frac{B^2}{2}-\log(A),A,B\right)\vert A,B\in \R, A>0\right\},$$
and we see that 
$${D'=\left\{\left(x_1,x_2,x_3\right)\in\R^3\vert x_1>\frac{x_3^2}{2}-\log(x_2),x_2>0\right\}}.$$
Thus we can realize $D'$ as the epigraph of the function $F(x_2,x_3)=\frac{x_3^2}{2}-\log(x_2)$. By computing the Hessian we see that $F$ is a convex function and thus $D'$ is a convex set. The function $F$ is the boundary function for parabolic coordinates for $D'$ centered at $(p,H)$, where  $p=[1:0:0:0]$ and $H$ is the projective plane dual to $[0:0:0:1].$

We next show that $D'$ does not contain a complete affine line. Suppose for contraction that $\ell$ is an affine line that is entirely contained in $D'$. Our argument will make use of the parabolic coordinates introduced in the previous paragraph. If the tangent vector of $\ell$ has any component in the $x_2$ direction then $\ell$ will intersect the $x_1x_3$-plane, which is disjoint from $D'$. Thus $\ell$ must lie in a plane, $P$, parallel to the $x_1x_3$-plane. The intersection of $P$ with $D'$ is the epigraph of a function of the form $x_3^2/2+C$ for some constant $C$. It is easy to see that any affine line in $P$ intersect the graph of $x^2_3/2+C$, which contradicts the fact that $\ell$ is entirely contained in $D'$.   

Finally, we show that $D'$ is not strictly convex by exhibiting an explicit line segment in $\partial D'$. Let $c>0$ and observe that the affine ray 
$$R_c=\{[c u:u:0:1]\mid u>T_c\},$$
(where $T_c$ is a positive constant that depends on $c$) is completely contained in $D'$. The ideal endpoint of this affine ray is $[c:1:0:0]$, which is contained in the boundary of $D'$ in $\RP^3$. Varying $c$ parameterizes a line segment that is contained in $\partial D'$. 
\qed
\end{proof}

The domains $D'$ and $D_t$ also admit foliations by horospheres. Let $\mathcal{H}'_\kappa$ be the $L'$ orbit of $(\kappa,1,0)$. Let $\mathcal{H}^t_\kappa$ be the $L_t$-orbits of the points $(\kappa,0,0)$. The sets $\bigcup_{s>0}\mathcal{H}'_s$ (resp.\ $\bigcup_{s>0}\mathcal{H}^t_s$) are easily seen to be a foliation of $D'$ (resp.\ $D_t$) by algebraic horospheres. For this reason we call $\mathcal{H}'_\kappa$ (resp.\ $\mathcal{H}^t_\kappa$) \emph{$L'$-horospheres} (resp.\ \emph{$L_t$-horospheres}). 

If $\Gamma_t$ is a lattice inside $L_t$ then $D_t/\Gamma_t$ is a properly convex manifold that is again diffeomorphic to $T^2\times(0,\infty)$ by a diffeomorphism that sends $\mathcal{H}^t_\kappa/\Gamma_t$ to $T^2\times\{\kappa\}$. In this case the map $[x_1:x_2:x_3:1]\mapsto[x_2:x_3:1]$ restricted to $\mathcal{H}^t_\kappa$ is the developing map for an affine structure on $\mathcal{H}^t_\kappa/\Gamma_t$. Let $E_t=\{(x_1,x_2,x_3)\in \R^3\vert x_2>-1/t\}$, then $E_t/\Gamma_t$ is a complete generalized cusp and $D_t/\Gamma_t$ is a generalized cusp. As $t\to 0$ we see that $\mathfrak{L}_t$ converges to $\mathfrak{L}_0$, $E_t$ converges to $\R^3$, and  $D_t$ converges to $D_0$ (see Figure \ref{f:domaindeformation}). Furthermore, if $\lim_{t\to 0}\Gamma_t$ converges to a  lattice in $L_0$ then $D_t/\Gamma_t$ 
will converge to a hyperbolic cusp and the affine tori $\mathcal{H}^t_\kappa/\Gamma_t$ will converge to a Euclidean torus (see Figure \ref{f:affinedeformation}). In \cite{Baues11}, Baues describes in more detail how 2-dimensional affine tori can converge to one another. In his notation, the convergence we have just described is a sequence of type $C_1$ affine structures converging to a type $D$ affine structure.       

\begin{figure}[h]
 \begin{center}
  \includegraphics[scale=.4]{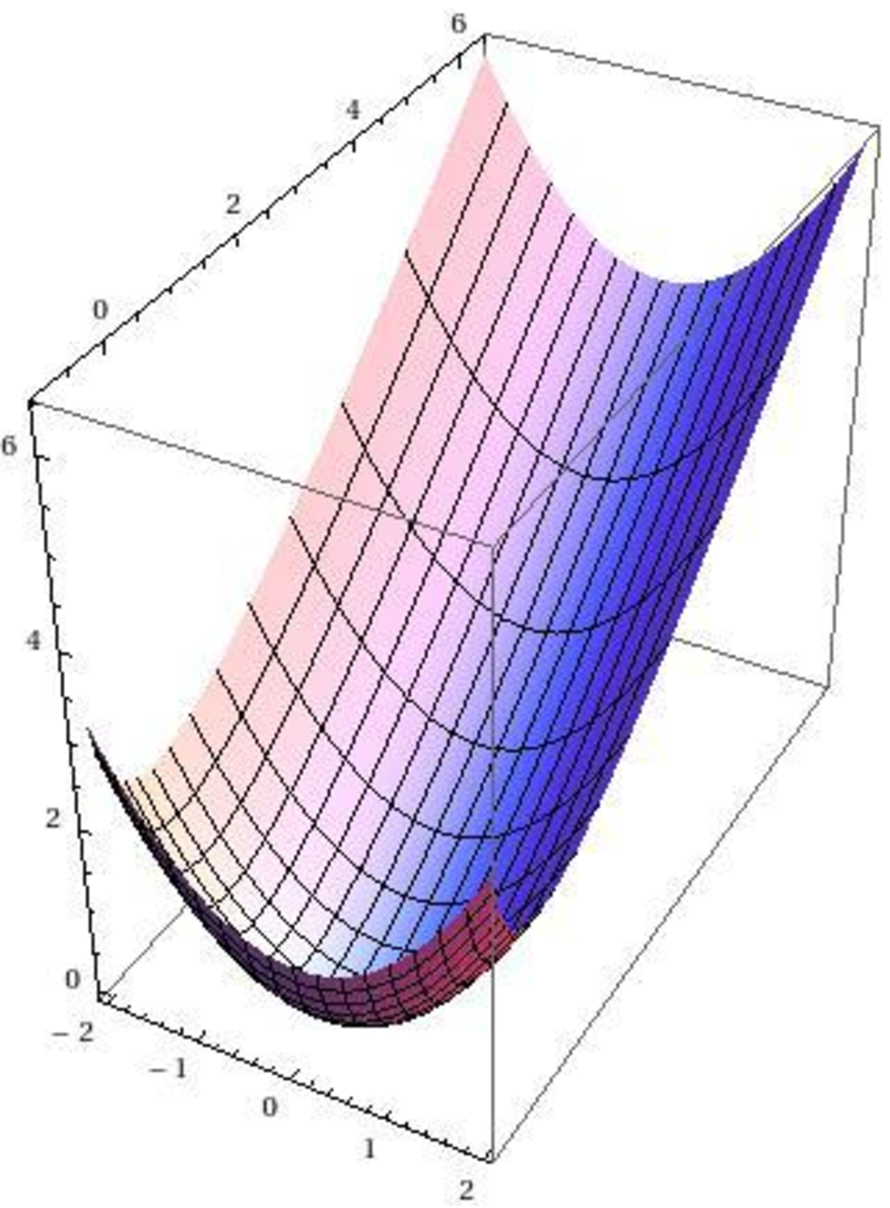}
  \includegraphics[scale=.4]{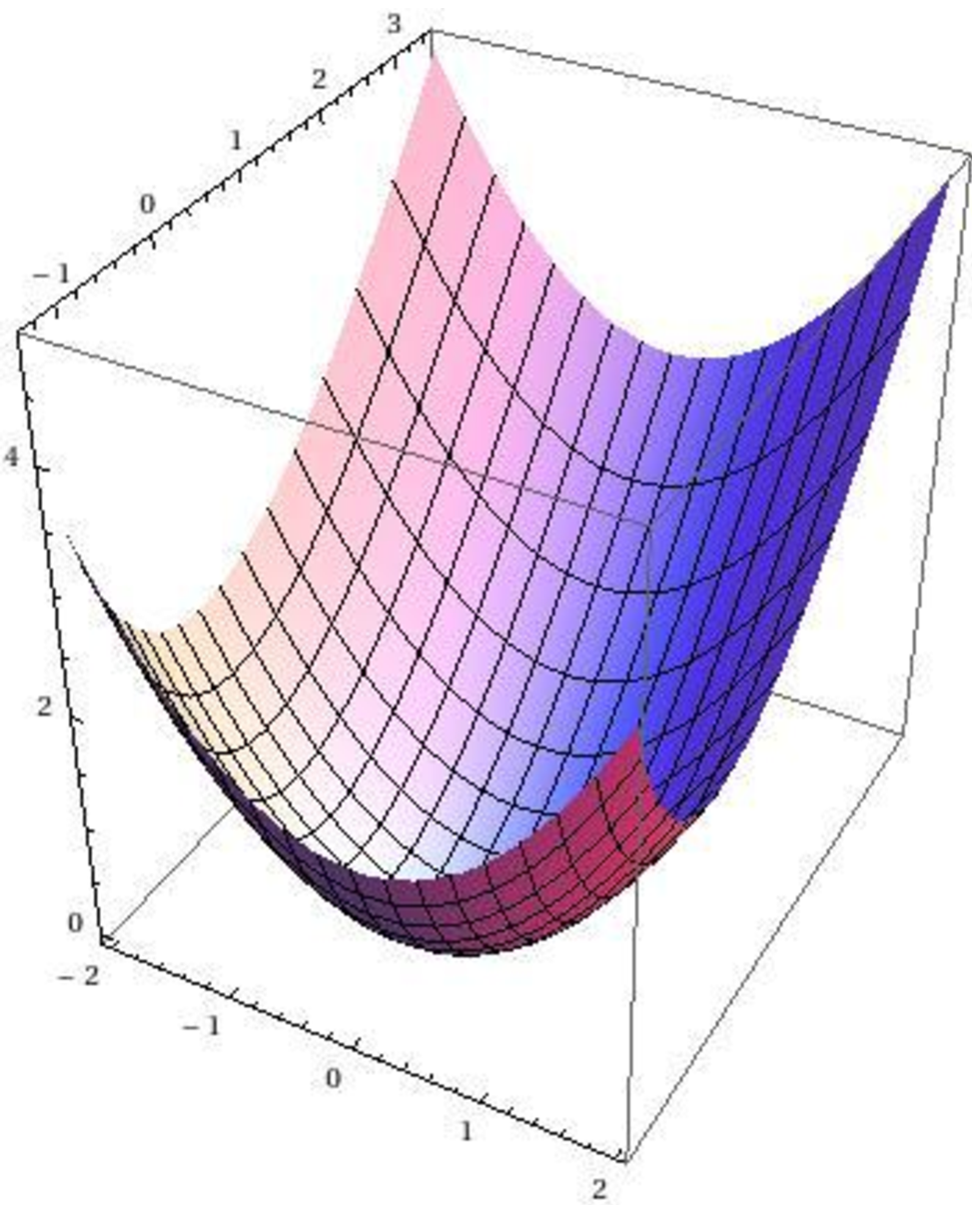}
  \includegraphics[scale=.4]{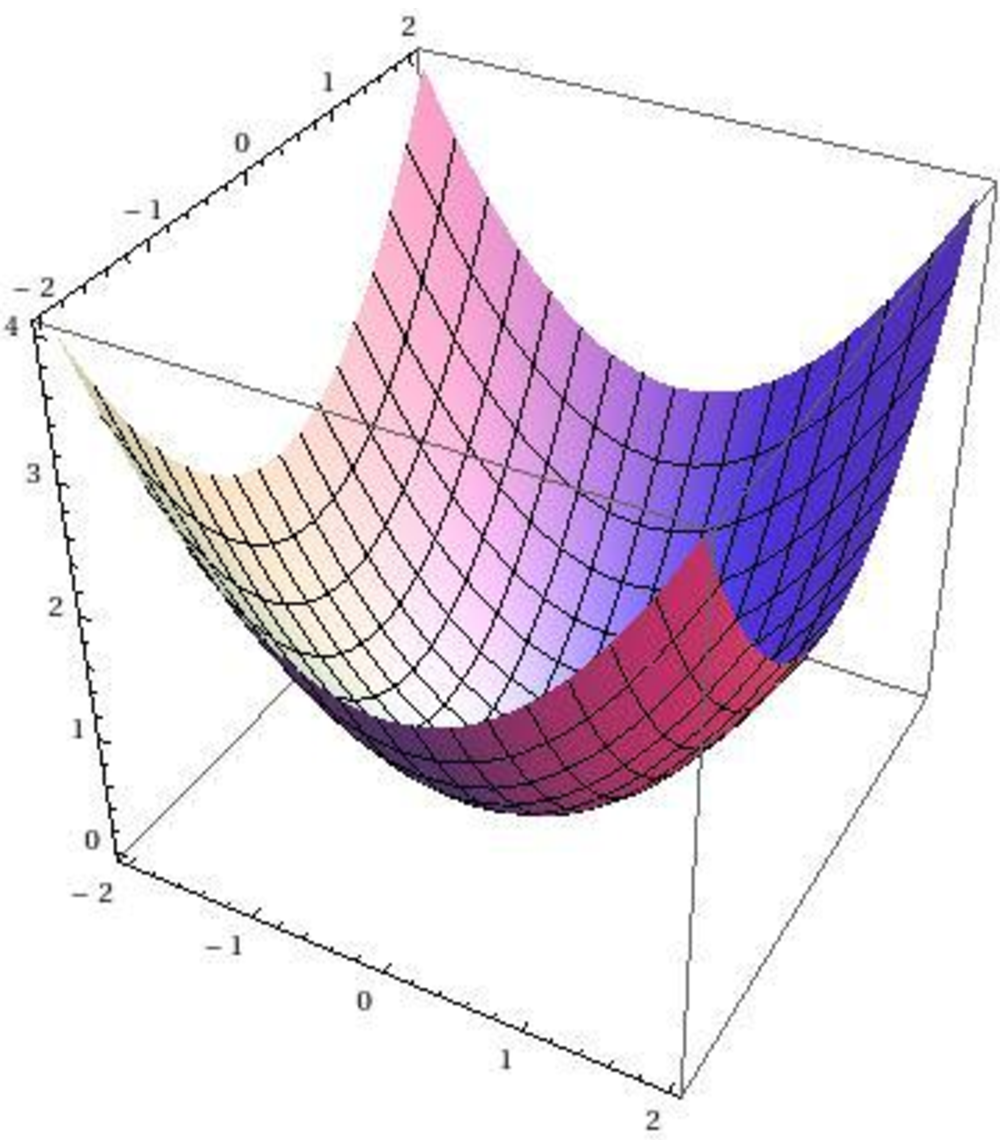}
  \caption{Convergence of $D_t$ to $D_0$.\label{f:domaindeformation}}
 \end{center}
 \end{figure}

\begin{figure}
 \begin{center}
  \includegraphics[scale=.4]{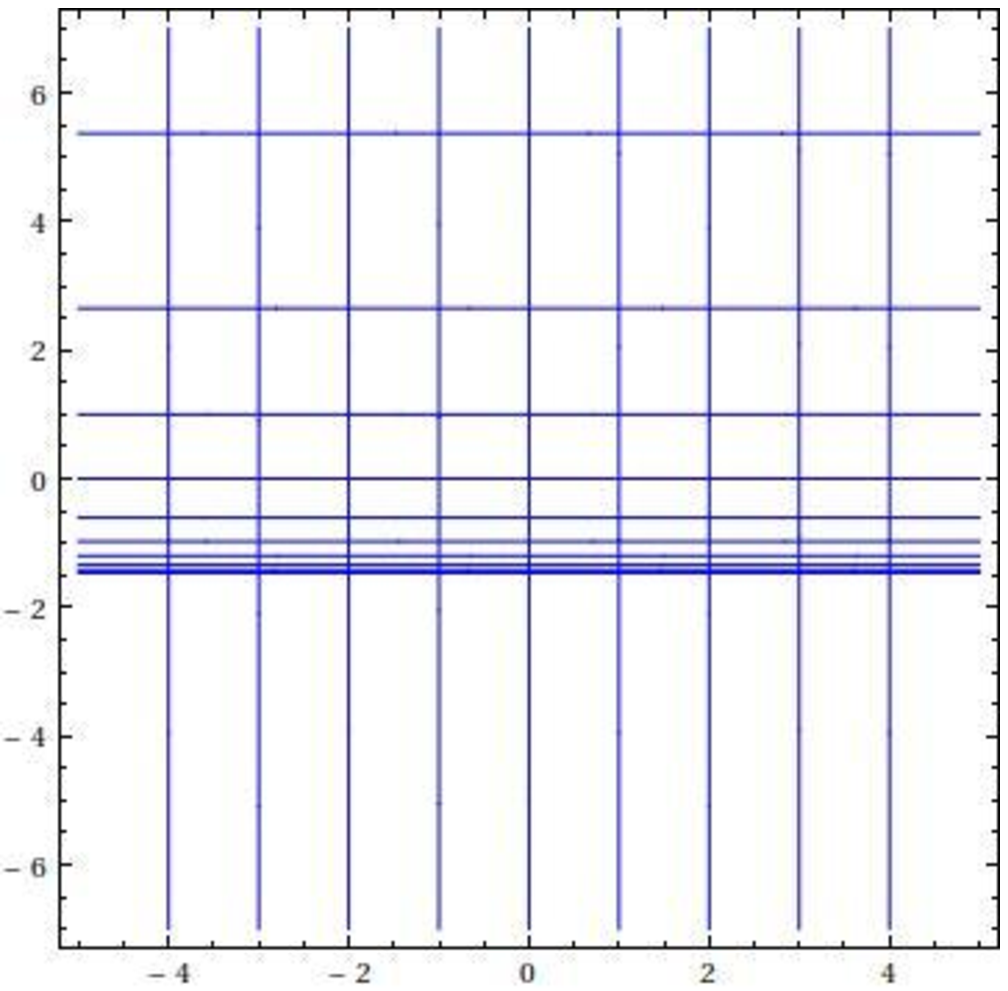}
  \includegraphics[scale=.4]{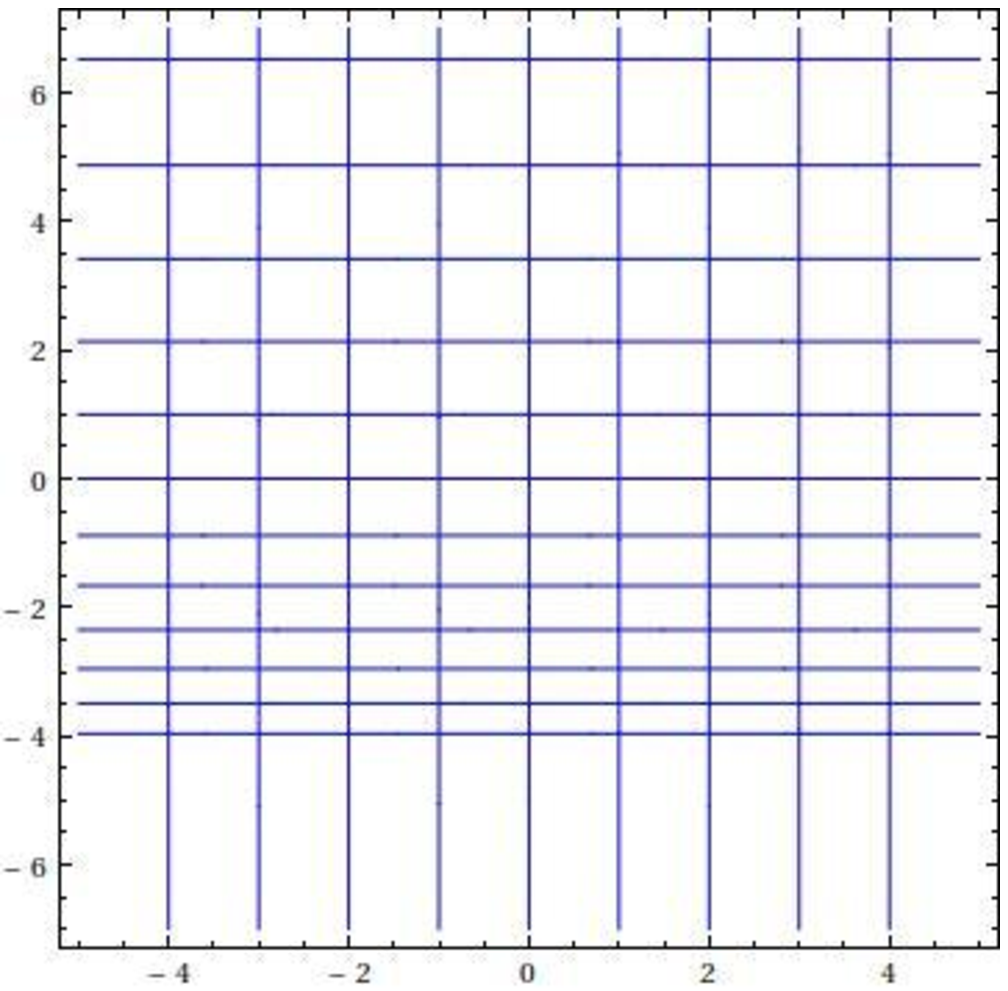}
  \includegraphics[scale=.4]{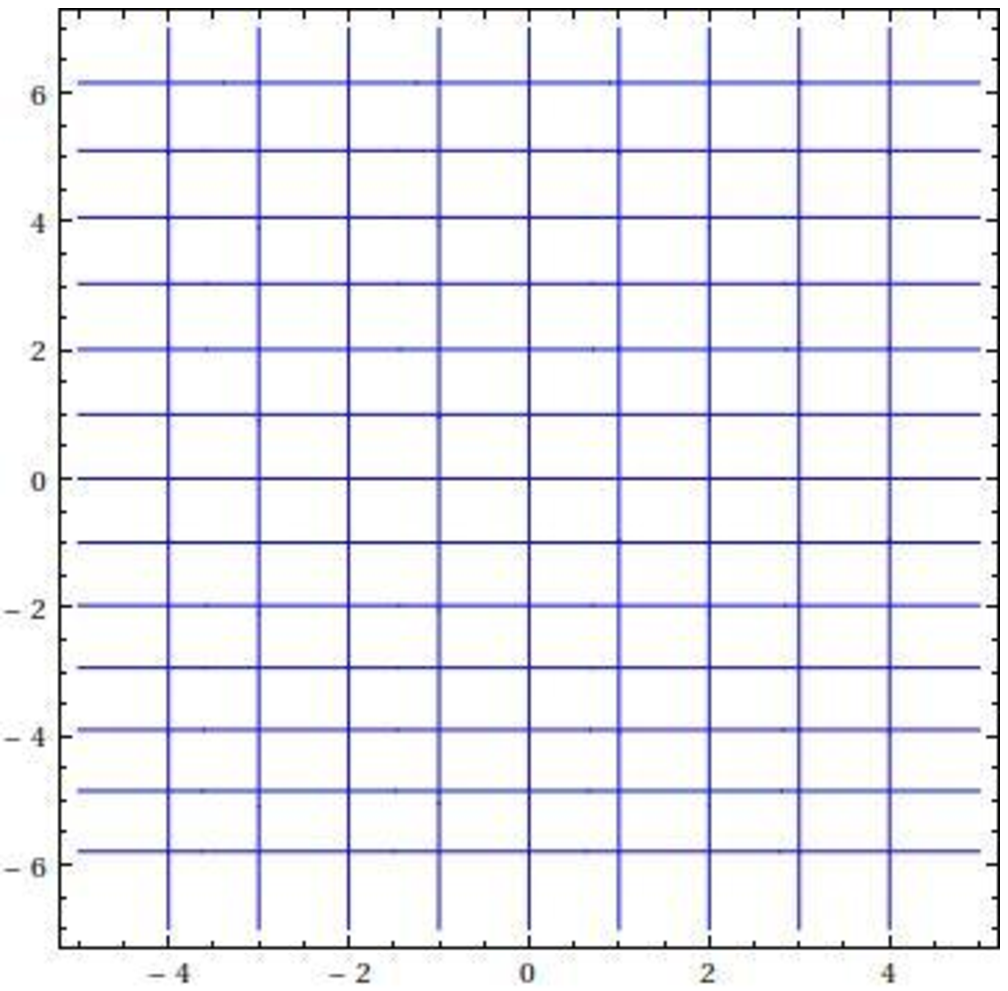}
  \caption{Fundamental domains of affine tori converging to fundamental domains of a Euclidean torus \label{f:affinedeformation}}
 \end{center}

\end{figure}

\subsection{Rank 2 Lattices in $L'$}

To conclude this section we characterize the holonomy of cusps which are projectively equivalent to $D'/\Gamma'$, where $\Gamma'$ is a lattice of $L'$. Additionally, we discuss when a family of such holonomies converges to the holonomy of a hyperbolic cusp. We begin by analyzing which rank 2 discrete abelian subgroups of $\pgl$ are conjugate into $L'$. 

We begin by observing that if $A$ and $B$ are two commuting matrices in $\GL[4]{\R}$ with positive, real eigenvalues then the group $\langle A,B\rangle$ is contained in a Lie subgroup, $L_{AB}$, of $\GL[4]{\R}$ isomorphic to $\R^2$. This Lie group is isomorphic to its Lie algebra, $\mathfrak{L}_{AB}$ via the exponential map. Let $\alpha$ and $\beta$ be the elements of $\mathfrak{L}_{AB}$ corresponding to $A$ and $B$ via the exponential map. We can now define a map $m:\mathfrak{L}_{AB}\to \R[t]$ that assigns to an element of $\mathfrak{L}_{AB}$ its minimal polynomial. We call this function the \emph{minimal polynomial map} for $\mathfrak{L}_{AB}$. It is clear that the minimal polynomial map is invariant under conjugation by an element of $\GL[4]{\R}$. 

Next, we examine how the function $m$ behaves on the Lie algebra $\mathfrak{L'}$. An element of $x\neq 0$ of $\mathfrak{L'}$ has the form 
$$x=\begin{pmatrix}
   0 & 0 & b & -a\\
   0 & a & 0 & 0\\
   0 & 0 & 0 & b\\
   0 & 0 & 0 & 0
  \end{pmatrix}
$$
and we see that $m(x)=t^{n(a,b)}(t-a)$, where 
$$n(a,b)=\left\{\begin{array}{rcl}
                 2 & \rm{if} & ab=0\\
                 3 & &\rm{otherwise} 
                \end{array}
\right.$$
More generally, we see that 
\begin{equation}\label{e:minpolyform}
m(x)=t^{n(x)}(t-f(x))
\end{equation}
 where $f(x)$ is a linear functional on $\mathfrak{L'}$ and $n:\mathfrak{L'}\bs\{0\}\to \{2,3\}$. The kernel of this linear functional is contained in $n^{-1}(2)$, and $n^{-1}(2)$ is a union of two distinct linear subspaces of $\mathfrak{L'}$. Elements of $L'$ that correspond under the exponential map to elements of $\ker f$ are called \emph{pure translations}. Elements of $L'$ that correspond under the exponential map to elements of $n^{-1}(2)\bs \ker f$ are called \emph{pure dilations}. Elements of $L'$ are affine transformations of $\R^3$ that preserve the foliation of $\R^3$ by vertical lines. The set of these lines can be identified with $\R^2$ in such a way that $L'$ acts by affine transformations. The action on $\R^2$ of a pure translation corresponds to an affine transformation whose linear part is the identity and the action on $\R^2$ of a pure dilation corresponds to an affine transformation whose linear part has distinct real eigenvalues and whose translational part is trivial.  
  
 At first, one might hope that any Lie algebra for which the minimal polynomial map has these properties would be conjugate to $\mathfrak{L'}$. However the Lie algebra $\mathfrak{L'_{-}}$ of elements of the form 

$$\begin{pmatrix}
   0 & 0 & b & a\\
   0 & a & 0 & 0\\
   0 & 0 & 0 & b\\
   0 & 0 & 0 & 0
  \end{pmatrix}
$$
gives rise to a minimal polynomial map of the same form, but $\mathfrak{L'}$ is not conjugate to $\mathfrak{L_{-}'}.$

\begin{remark}\label{r:convexorbit}
 To see why $\mathfrak{L'}$ and $\mathfrak{L'_{-}}$ are not conjugate we look at the corresponding Lie groups. As we have seen, the Lie group $L'$ corresponding to $\mathfrak{L'}$ preserves an open, properly convex domain bounded by the closure of the $L'$-orbit of $[0:1:0:1]$. The Lie group $L'_{-}$ corresponding to $\mathfrak{L'_{-}}$ preserves no such properly convex domain. 
\end{remark}

However, the following lemma shows that once we know that our minimal polynomial map has these properties that this is the only ambiguity.

\begin{lemma}\label{l:liealgconjugacy}
Let $\mathfrak{F}$ be a two dimensional Abelian Lie subalgebra of $\mathfrak{gl}_4$. Suppose that for $x\neq 0$ in $\mathfrak{F}$ that $m(x)=t^{n(x)}(t-f(x))$, where $f:\mathfrak{F}\to \R$ is a non-trivial linear functional and $n:\mathfrak{F}\bs\{0\}\to \{2,3\}$ with the property that 
\begin{itemize}
 \item $\ker f\subset n^{-1}(2)$ and 
 \item $n^{-1}(3)\neq \emptyset$ .
\end{itemize}
Then $\mathfrak{F}$ is conjugate to either $\mathfrak{L'}$ or $\mathfrak{L'_{-}}$.
\end{lemma}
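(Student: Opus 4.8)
The plan is to build the conjugation out of the eigenspace decomposition determined by any element of $n^{-1}(3)$, and then to isolate a single sign invariant that distinguishes the two target algebras. First I would choose a basis $\{x,y\}$ of $\mathfrak{F}$ with $x\in\ker f\smallsetminus\{0\}$ and $y\in n^{-1}(3)$; this is possible since $f$ is nontrivial and $n^{-1}(3)\neq\emptyset$, and the hypothesis $\ker f\subset n^{-1}(2)$ forces $f(y)=:\lambda\neq 0$ (otherwise $y\in\ker f$ would give $n(y)=2$). Reading off the prescribed minimal polynomials, $x$ is nilpotent with minimal polynomial $t^3$ while $y$ has minimal polynomial $t^3(t-\lambda)$; hence $y$ consists of a single nilpotent Jordan block of size $3$ together with the simple eigenvalue $\lambda$, and $\R^4$ splits as $W\oplus L$, where $W=\ker y^3$ is the $3$-dimensional generalized $0$-eigenspace and $L$ is the $1$-dimensional $\lambda$-eigenspace.

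Since $\mathfrak{F}$ is abelian, $x$ commutes with $y$ and so preserves $W$ and $L$. On the line $L$ the nilpotent operator $x$ must vanish, while $y|_L=\lambda$. On $W$ the operator $N:=y|_W$ is a regular (single-block) nilpotent, whose centralizer is $\R[N]$; as $x|_W$ is nilpotent and commutes with $N$, this gives $x|_W=c_1N+c_2N^2$, and the condition $n(x)=2$ (minimal polynomial $t^3$) forces $c_1\neq 0$. The next step is to exhibit the ``dilation'' generator: set $z:=x-c_1y\in\mathfrak{F}$, so that $z|_W=c_2N^2$ and $z|_L=-c_1\lambda\neq 0$. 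Because $n$ only takes the values $2$ and $3$, the element $z$ cannot have minimal polynomial $t(t-f(z))$; this rules out $c_2=0$, so $z|_W$ is a nonzero multiple of $N^2$.

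It remains to normalize. I would pass to a cyclic basis for the regular nilpotent $x|_W$, so that in a suitable basis of $W$ one has $x|_W=J_3$ (the standard Jordan block) and consequently $z|_W=\kappa J_3^2$ with $\kappa=c_2/c_1^2\neq 0$, while $z|_L=\mu:=-c_1\lambda\neq 0$. Thus, in the basis adapted to $W\oplus L$, the algebra $\mathfrak{F}$ is spanned by $J_3\oplus 0$ and $\kappa J_3^2\oplus\mu$. Conjugating by the non-scalar diagonal matrix $\mathrm{diag}(d^2,d,1)$ on $W$ scales $J_3\mapsto dJ_3$ and $J_3^2\mapsto d^2J_3^2$; after rescaling the generators this lets me set the eigenvalue on $L$ to $1$ and the coefficient of $J_3^2$ to $\pm 1$, the admissible choice $d^2=\pm\mu/\kappa$ being the one that makes $d^2$ positive. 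A final permutation of the basis then puts these matrices into the explicit shapes defining $\mathfrak{L}'$ and $\mathfrak{L}'_{-}$.

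The hard part is exactly this last normalization. Once $x|_W=J_3$ is fixed, its centralizer is only $\R[J_3]$, so $z|_W=\kappa J_3^2$ is rigid apart from the single torus parameter $d$, and $d^2$ can adjust only the magnitude of $\kappa/\mu$, never its sign. The residual sign of $\mu\kappa$ is therefore a genuine conjugacy invariant, and it is precisely this sign that selects between the two target algebras, in agreement with the non-conjugacy recorded in Remark \ref{r:convexorbit}. Confirming that the two sign cases land on $\mathfrak{L}'$ and $\mathfrak{L}'_{-}$ (and not on some other pair) is the one explicit computation I would carry out with care, by comparing the two resulting $2$-dimensional spans of matrices coordinatized by the coefficient of $J_3$, the coefficient of $J_3^2$, and the eigenvalue on $L$.
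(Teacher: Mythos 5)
Your proof is correct, but it reaches the normal form by a more structural route than the paper. The paper picks a regular element $\alpha\in n^{-1}(3)$, puts it in (a variant of) Jordan form, and then solves the entrywise linear system $[\alpha,\beta]=0$ to pin down the second generator, deducing $b_{11}=0$ and $b_{22}=f(\beta)$ from the minimal polynomial hypothesis; this leaves a two-parameter family with a cross term $c_1a+c_2b$ in the $(1,4)$ entry, which is then removed by an explicit conjugation combining a diagonal scaling with a shear. You instead work with the invariant splitting $\R^4=W\oplus L$ determined by the regular element, quote the fact that the centralizer of a regular nilpotent is the polynomial algebra in it, and choose generators adapted to the problem: the nilpotent $x\in\ker f$ and the correction $z=x-c_1y$, whose restriction to $W$ is automatically a multiple of $J_3^2$. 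This choice diagonalizes the parametrization from the start, so the paper's shear never appears, and only the torus $\mathrm{diag}(d^2,d,1)$ plus generator rescaling is needed at the end. Your argument that $c_2\neq 0$ (via the forbidden minimal polynomial $t(t-f(z))$) is the exact analogue of the paper's argument that its $c_1\neq 0$. What your approach buys is conceptual clarity: the residual sign of $\kappa\mu$, which conjugation can only change by squares, is exhibited as the invariant separating $\mathfrak{L'}$ from $\mathfrak{L'_{-}}$, which illuminates Remark \ref{r:convexorbit} (though, as you implicitly note, that invariance is not needed for the lemma itself, only the landing of the two sign cases on the two models, which your final normalization does achieve). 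What the paper's computational approach buys is explicit conjugating matrices, which are genuinely used later: Section \ref{s:fig8} performs ``the conjugacy described in the proof of Lemma \ref{l:liealgconjugacy}'' on the figure-eight peripheral group, so having the conjugation in closed form matters there.
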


\begin{proof}
 We begin by selecting generators $\alpha,\beta$ for $\mathfrak{F}$ such that $\alpha\in n^{-1}(3)$ (observe that our hypothesis forces $f(\alpha)\neq0$). Using (a small variation of) Jordan normal form, we can select a basis where 
 
  $$\alpha=\begin{pmatrix}
    0 & 0 & 1 & 0\\
    0 & f(\alpha) & 0 & 0\\
    0 & 0 & 0 & 1\\
    0 & 0 & 0 & 0
   \end{pmatrix}\text{ and }
\beta=\begin{pmatrix}
       b_{11} & b_{12} & b_{13} & b_{14}\\
       b_{21} & b_{22} & b_{23} & b_{24}\\
       b_{31} & b_{32} & b_{33} & b_{34}\\
       b_{41} & b_{42} & b_{43} & b_{44}
      \end{pmatrix}
$$
 
 Computing the commutator we see that 
$$[\alpha,\beta]=\begin{pmatrix}
b_{31} & b_{32}-f(\alpha)b_{12} & b_{33}-b_{11} & b_{34}-b_{13}\\
f(\alpha)b_{21} & 0 & f(\alpha)b_{23}-b_{21} & f(\alpha)b_{24}-b_{23}\\
b_{41} & b_{42}-f(\alpha)b_{32} & b_{43}-b_{31} & b_{44}-b_{33}\\
0 & -f(\alpha)b_{42} & -b_{41} & -b_{43}
\end{pmatrix}.$$
Therefore, $b_{12}=b_{21}=b_{23}=b_{24}=b_{31}=b_{32}=b_{41}=b_{42}=b_{43}=0$,  $b_{11}=b_{33}=b_{44}$, and $b_{13}=b_{34}$. We conclude that 
$$\beta=\begin{pmatrix}
b_{11} & 0 & b_{13} & b_{14}\\
0 & b_{22} & 0 & 0 \\
0 & 0 & b_{11} & b_{13}\\
0 & 0 & 0 & b_{11}
\end{pmatrix}.$$
 From the properties of $m$ we deduce that $b_{11}=0$ and $b_{22}=f(\beta)$, and thus

  we have now reduced to the case that $\mathfrak{F}$ is conjugate to a Lie algebra of the form 
  $$\begin{pmatrix}
     0 & 0 & b &c_1a+c_2b\\
     0 & a & 0 & 0\\
     0 & 0 & 0 & b\\
     0 & 0 & 0 & 0
    \end{pmatrix}
$$
$c_1\neq0$ since otherwise there would be an element of $\mathfrak{F}$ whose minimal polynomial is not divisible by $t^2$. Finally, by conjugating by 
$$\begin{pmatrix}
   \abs{c_1} & 0 & 0 & 0\\
   0 & 1& 0 & 0\\
   0 & 0 & \sqrt{\abs{c_1}} & -c_2\\
   0 & 0 & 0 & 1
  \end{pmatrix}
$$
we can assume that $c_2=0$ and $c_1=\pm 1$. 
\qed
\end{proof}

Next, we will address the question of when a family, $\Gamma_t$, of lattices in $L'$ can be conjugated so that their limit is a lattice of $\Gamma_0\subset L_0$. Since $L'$ is a simply connected abelian Lie group we see that it is isomorphic to its Lie algebra (via the exponential map), which is in turn isomorphic to $\R^2$. Explicitly, we have a map 
\begin{equation}\label{e:liealgebramap}
(a,b)\stackrel{g}{\mapsto} \begin{pmatrix}
                0 & 0 & b & -a\\
                0 & a & 0 & 0\\
                0 & 0 & 0 & b\\
                0 & 0 & 0 & 0
               \end{pmatrix}\stackrel{\exp}{\mapsto}
\begin{pmatrix}
                1 & 0 & b & \frac{b^2}{2}-a\\
                0 & e^a & 0 & 0\\
                0 & 0 & 1 & b\\
                0 & 0 & 0 & 1
               \end{pmatrix}
\end{equation}

We can now give a sufficient condition for a family of lattices in $L'$ to converge after conjugation to a lattice in $L_0$.
\begin{proposition}\label{p:Ltconvergence}
$\Gamma_t=\langle A_t,B_t\rangle$ be a family of lattices in $L'$, let $a_t$ and $b_t$ be the vectors in $\R^2$ that correspond to $A_t$ and $B_t$ under \eqref{e:liealgebramap}. Suppose that
\begin{enumerate}
 \item $\lim_{t\to 0}a_t=\lim_{t\to 0}b_t=0$,
 \item $a_t$ and $b_t$ are differentiable near 0, and 
 \item the derivatives of $a_t$ and $b_t$ at $0$ are linearly independent vectors.
\end{enumerate}
Then there exists a family $C_t$ of invertible matrices such that $C_t \Gamma_t C_t^{-1}\subset L_t$ and $\lim_{t\to 0} C_t \Gamma_t C_t^{-1}$ is a lattice in $L_0$. 
\end{proposition}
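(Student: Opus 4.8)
The plan is to take $C_t = V_t^{-1}$, where $V_t$ is the affine transformation from \eqref{e:Vt}. Recall from the proof of Lemma \ref{l:propconvex} that $V_t$ conjugates $\mathfrak{L}_t$ to $\mathfrak{L}'$, carrying the element of $\mathfrak{L}_t$ with parameters $(r,s)$ to the element of $\mathfrak{L}'$ with parameters $(a,b)=(tr,ts)$. Consequently $C_t = V_t^{-1}$ conjugates $\mathfrak{L}'$ to $\mathfrak{L}_t$, sending the element $g(a,b)$ of $\mathfrak{L}'$ (notation of \eqref{e:liealgebramap}) to the element of $\mathfrak{L}_t$ with parameters $(a/t,\,b/t)$. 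Since conjugation commutes with the exponential map and $L',L_t$ are the exponential images of $\mathfrak{L}',\mathfrak{L}_t$, this gives $C_t L' C_t^{-1} = L_t$. As $\Gamma_t \subset L'$, we immediately obtain $C_t \Gamma_t C_t^{-1} \subset L_t$, which is the first assertion.

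It then remains to analyze the limit. Writing $a_t = (\alpha_t,\beta_t)$, the computation above shows that $C_t A_t C_t^{-1}$ is the exponential of the element of $\mathfrak{L}_t$ with parameters $(\alpha_t/t,\,\beta_t/t)$, and similarly for $B_t$. Hypotheses (1) and (2) force $a_0 = 0$, so $a_t/t \to a'(0)$ and $b_t/t \to b'(0)$ as $t\to 0$, while at the same time $\mathfrak{L}_t \to \mathfrak{L}_0$ entrywise. By continuity of the exponential map it follows that $C_t A_t C_t^{-1}$ and $C_t B_t C_t^{-1}$ converge to the exponentials of the elements of $\mathfrak{L}_0$ with parameters $a'(0)$ and $b'(0)$, respectively, both of which lie in $L_0$. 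Thus $\lim_{t\to 0} C_t \Gamma_t C_t^{-1}$ is the subgroup of $L_0$ generated by these two limiting elements.

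Finally, since $L_0$ is simply connected and abelian, the exponential map gives an isomorphism $L_0 \cong \R^2$, under which the limiting group corresponds to the subgroup generated by $a'(0)$ and $b'(0)$. This subgroup is discrete and cocompact, hence a lattice in $L_0$, precisely when $a'(0)$ and $b'(0)$ are linearly independent, which is exactly hypothesis (3). This completes the argument.

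The one genuine subtlety, and the step I expect to be the main (if mild) obstacle, is that $V_t \to \infty$ as $t\to 0$, so a priori the conjugated generators could degenerate or blow up. The crux is that the generators $A_t,B_t$ themselves approach the identity at the linear rate $t$, and this shrinking exactly cancels the order-$1/t$ growth coming from $V_t$; the first-order data $a'(0),b'(0)$ record the surviving limit. This rate-matching is the only place the differentiability of $a_t,b_t$ is used, and verifying the entrywise convergence $\mathfrak{L}_t \to \mathfrak{L}_0$ together with $a_t/t \to a'(0)$ is the technical heart of the proof; everything else is the bookkeeping of the parameter correspondence induced by $V_t$.
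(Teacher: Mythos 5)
Your argument fails at its first step: the conjugation goes the wrong way. The relation actually established via \eqref{e:Vt} (and used in the paper) is $V_t\,\mathfrak{L}'\,V_t^{-1}=\mathfrak{L}_t$, equivalently $V_t^{-1}\mathfrak{L}_t V_t=\mathfrak{L}'$; note that $V_t$ carries $[0:1:0:1]$ (the base point of $D'$) to $[0:0:0:1]$ (the base point of $D_t$), so it is conjugation \emph{by} $V_t$, not by $V_t^{-1}$, that transports the stabilizer data of $D'$ to that of $D_t$. With your choice $C_t=V_t^{-1}$ the containment $C_t\Gamma_t C_t^{-1}\subset L_t$ is simply false: a direct computation with $x=g(a,b)$ from \eqref{e:liealgebramap} gives
$$V_t^{-1}\,x\,V_t=\begin{pmatrix} 0 & -a & tb & a(1-t^2)\\ 0 & a & 0 & -a\\ 0 & 0 & 0 & tb\\ 0 & 0 & 0 & 0\end{pmatrix},$$
which lies in $\mathfrak{L}_t$ only when $a=0$: membership requires the $(1,4)$ entry to vanish and the $(2,2)$ entry to equal $t$ times the $(1,2)$ entry, and here both conditions force $a=0$. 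Hence the exponentials of these elements do not lie in $L_t$, and the first assertion of the proposition fails for your $C_t$.

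The fix is a one-character change: take $C_t=V_t$, which is exactly what the paper does. Then $V_t\,g(a,b)\,V_t^{-1}$ is the element of $\mathfrak{L}_t$ with parameters $(a/t,\,b/t)$ --- precisely the parameter formula you go on to use, so everything downstream of your misidentification is sound and coincides with the paper's proof: hypotheses (1) and (2) give $a_t/t\to a'(0)$ and $b_t/t\to b'(0)$, the conjugated Lie algebra elements converge entrywise to elements of $\mathfrak{L}_0$, continuity of $\exp$ gives convergence of the generators to elements of $L_0$, and hypothesis (3) guarantees the two limiting elements are exponentials of linearly independent vectors in $\mathfrak{L}_0$ and therefore generate a lattice in $L_0$. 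Your closing remark --- that the $O(1/t)$ growth of the conjugator is cancelled by the linear rate at which $A_t,B_t$ approach the identity, with only the first-order data $a'(0),b'(0)$ surviving --- is the right intuition; it just has to be implemented with $V_t$ rather than $V_t^{-1}$.
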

\begin{proof}
Let $C_t=V_t$  be the family of matrices from \eqref{e:Vt}. As previously mentioned, the matrix $V_t$ conjugates the Lie algebra $\mathfrak{L}'$ to the Lie algebra $\mathfrak{L}_t$. Explicitly we see that if $x\in \mathfrak{L}'$ corresponds to the vector $(x_1,x_2)\in \R^2$ via $g$ then
\begin{equation}\label{e:conjfamily}
C_t x C_t^{-1}=\begin{pmatrix}
                 0 & \frac{x_1}{t} & \frac{x_2}{t} & 0\\
                 0 & x_1 & 0 & \frac{x_1}{t}\\
                 0 & 0 & 0 & \frac{x_2}{t}\\
                 0 & 0 & 0 & 0
                 \end{pmatrix}
\end{equation}
From \eqref{e:conjfamily} we see that $C_t\Gamma_tC_t^{-1}\subset L_t$. Furthermore, $C_t\Gamma_tC_t^{-1}\subset L_t$ will converge as $t\to 0$ provided that the limits of the entries of $C_tA_tC_t^{-1}$ and $C_tB_tC_t^{-1}$ converge as $t\to 0$. Since $\exp$ is a smooth map  we see from \eqref{e:conjfamily} that $C_tB_tC_t^{-1}$ and $C_tA_tC_t^{-1}$ will converge provided that conditions (1) and (2) are satisfied. Furthermore, if condition (3) is satisfied then it is easy to see that the limits of $C_tA_tC_t^{-1}$ and $C_tB_tC_t^{-1}$ will generate a rank two group.  
\qed 
\end{proof}

\section{Obstructions Coming From Cusp Shape}\label{s:obstruction}

In this section we discuss the relationship between the cusp shape of a finite volume hyperbolic manifold and types of projective deformations that can occur. 

For the sake of completeness we will briefly describe the structure of finite volume hyperbolic 3-manifolds and cusp shapes. For details about the structure of hyperbolic manifolds see \cite{Ratcliffe06, ThurstonNotes} and for details about cusp shape see \cite{Riley79}. Let $M$ be an orientable 3-manifold that admits a complete, finite volume, hyperbolic structure then $M$ can be written as 
\begin{equation}\label{e:hyperbolicdecomp}
M=M_K\cup\bigcup_{j=1}^m C_j,
\end{equation}
where $M_K$ is a compact manifold with $m$ torus boundary components, each $C_j$ is diffeomorphic to $T^2\times [1,\infty)$, where $T^2$ is a 2-dimensional torus, and $M_K\cap C_j=T^2\times\{1\}$. Furthermore, each $C_j$ is a convex submanifold of $M$ with strictly convex boundary. The interior of each $C_j$ is projectively equivalent to $D_0/\Gamma_0$, where $\Gamma_0$ is a  lattice in $L_0$. Therefore, each $C_j$ is a generalized cusp. We refer to the $C_j$ as \emph{cusps} and after picking a basepoint in $M$ we can identify $\fund{C_j}\cong \Z^2$ with a subgroup of $\fund{M}$ and we refer to these subgroups as \emph{peripheral subgroups}. Next, let $\Delta_j$ be a peripheral subgroup associated with the cusp $C_j$, then the finite volume hyperbolic structure provides a well defined Euclidean similarity structure associated to $C_j$ called the \emph{cusp shape} of $C_j$. 

For each $j$ we choose generators $m_j$ and $l_j$ for $\Delta_j$. The cusp shape induces a discrete faithful representation from $\Delta_j$ into $\psl$. After conjugating, we can assume that under this representation 
$$m_j\mapsto \begin{pmatrix}
            1 & 1\\
            0 & 1
           \end{pmatrix}\ 
l_j\mapsto \begin{pmatrix}
          1 & \omega_j\\
          0 & 1
         \end{pmatrix}
$$
We call the number $\omega_j$ the \emph{cusp shape of $C_j$ relative to $\{m_j,l_j\}$} (see \cite{Riley79} for a proof that this complex number is well defined).  Furthermore, by choosing $m_j$ and $l_j$ to be properly oriented with respect to orientation on $C_j$ coming from $M$ we can assume that $\omega_j$ has positive imaginary part. If $m'_j$ and $l'_j$ are another set of generators that are positively oriented with respect to the induced orientation coming form $M$ then the cusp shape of $C_j$ with respect to $\{m'_j,l'_j\}$ is in the same $PSL_2(\Z)$-orbit as $\omega_j$ (here we are assuming that $PSL_2(\Z)$ acts on $\C$ by linear fractional transformations). With this in mind we say that $C_j$ \emph{has imaginary cusp shape } if for some choice of generators of $\Delta_j$ the cusp shape with respect to these generators has real part equal to 0.   

Let $P$ be the subgroup of $\psl$ of matrices of the form 
$$\begin{pmatrix}
   1 & x+iy\\
   0 & 1
  \end{pmatrix}
$$
There is an isomorphism between $L_0$ and $P$ given by 
\begin{equation}\label{e:L_0toP}
 \begin{pmatrix}
  1 & x & y & \frac{1}{2}\left(x^2+y^2\right)\\
  0 & 1 & 0 & x\\
  0 & 0 & 1 & y\\
  0 & 0 & 0 & 1
 \end{pmatrix}\mapsto
\begin{pmatrix}
 1 & x+iy\\
 0 & 1
\end{pmatrix}
\end{equation}
If we have a complete hyperbolic structure on $M$ then \eqref{e:L_0toP} tells us that there is an induced representation of $\Delta_j$ into $L_0$, where 
$$m_j\mapsto \begin{pmatrix}
            1 & x^j_1 & y^j_1 & \frac{1}{2}\left((x^j_1)^2+(y^j_1)^2\right)\\
            0 & 1 & 0 & x^j_1\\
            0 & 0 & 1 & y^j_1\\
            0 & 0 & 0 & 1
           \end{pmatrix} \ 
l_i\mapsto \begin{pmatrix}
            1 & x^j_2 & y^j_2 & \frac{1}{2}\left((x^j_2)^2+(y^j_2)^2\right)\\
            0 & 1 & 0 & x^j_2\\
            0 & 0 & 1 & y^j_2\\
            0 & 0 & 0 & 1
           \end{pmatrix}$$
Thus we see that the cusp shape of the $j$th cusp of $M$ relative to $\{m_j,l_j\}$ is given by 
\begin{equation}\label{e:cuspshape}
\frac{x_2^j+iy_2^j}{x_1^j+iy_1^j}=\frac{1}{(x^j_1)^2+(y^j_1)^2}\left(x^j_1x^j_2+y^j_1y^j_2+i\left(x^j_1y^j_2-x^j_2y^j_1\right)\right). 
\end{equation}

The next proposition uses \eqref{e:cuspshape} to show how the cusp shape of a manifold can obstruct certain types of deformations. 
\begin{proposition}\label{p:cuspshapeobstruction}
 Let $M$ be a non-compact finite volume hyperbolic 3-manifold. Let $C$ be a cusp of $M$ and let  $\{m,l\}$ be a choice of generators for a peripheral subgroup corresponding to $C$.  Suppose that $M$ admits a family of projective structures with holonomy $\rho_t$ such that $\rho_0=\rhogeo$ and $\rho_t(m)$ is a pure translation in $L_t$ and $\rho_t(l)$ is a pure dilation in $L_t$. Then the cusp shape of $C$ is purely imaginary. 
\end{proposition}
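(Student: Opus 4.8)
The plan is to compare the two descriptions of the cusp shape that the hypotheses force upon us: one coming from the complete hyperbolic structure at $t=0$ (via the formula \eqref{e:cuspshape}), and one coming from the deformed projective structure (where $\rho_t(m)$ is a pure translation and $\rho_t(l)$ a pure dilation in $L_t$). The key point is that the cusp shape is a continuous invariant of the induced Euclidean similarity structure on the cusp, so I can compute it by passing to the limit $t\to 0$ of the deformed picture and matching it with \eqref{e:cuspshape}. Concretely, I would first use Proposition \ref{p:Ltconvergence}: since $\rho_t(m)$ and $\rho_t(l)$ are (conjugate into) elements of $L'$, write $a_t^m,b_t^m$ and $a_t^l,b_t^l$ for the vectors in $\R^2$ attached to them via \eqref{e:liealgebramap}. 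The hypothesis that $\rho_0=\rhogeo$ forces these vectors to tend to $0$ as $t\to 0$, and after conjugating by the family $C_t=V_t$ from \eqref{e:Vt} I obtain a convergent family of lattices in $L_t$ whose limit is a lattice $\Gamma_0\subset L_0$.

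The heart of the argument is to extract, in the limit, the peripheral data $(x_i^j,y_i^j)$ appearing in \eqref{e:cuspshape} from the translation/dilation hypotheses. Here I would exploit the structure identified just before Lemma \ref{l:liealgconjugacy}: the minimal polynomial map $m(x)=t^{n(x)}(t-f(x))$ gives the linear functional $f$ on $\mathfrak{L}'$ whose kernel consists of \emph{pure translations}, while \emph{pure dilations} lie in $n^{-1}(2)\setminus\ker f$. Translating into the $(a,b)$ coordinates of \eqref{e:liealgebramap}, a pure translation is an element with $a=0$ (so $f=0$) and a pure dilation is an element with $b=0$ (trivial translational part on $\R^2$, as recorded in the paragraph following \eqref{e:minpolyform}). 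Thus the hypothesis that $\rho_t(m)$ is a pure translation pins $a^m_t\equiv 0$ and that $\rho_t(l)$ is a pure dilation pins $b^l_t\equiv 0$, for all small $t$. Differentiating at $t=0$ and feeding the resulting vectors through the conjugation \eqref{e:conjfamily}, I read off that in the limiting lattice $\Gamma_0\subset L_0$ the generator $m$ has a purely ``$y$-type'' displacement and $l$ a purely ``$x$-type'' displacement, i.e. in the notation of \eqref{e:cuspshape} the vanishing conditions $x^j_1=0$ and $y^j_2=0$ (or the symmetric pair, depending on the identification of the two summands in $\mathfrak{L}'$).

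Substituting $x^j_1=0$ and $y^j_2=0$ into \eqref{e:cuspshape} gives cusp shape
\[
\frac{1}{(y^j_1)^2}\left(y^j_1 y^j_2 + i\left(x^j_1 y^j_2 - x^j_2 y^j_1\right)\right)
= \frac{-x^j_2 y^j_1\, i}{(y^j_1)^2} = \frac{-x^j_2}{y^j_1}\, i,
\]
which is purely imaginary, as claimed; the symmetric case works identically. The main obstacle I anticipate is bookkeeping rather than conceptual: I must verify that the notions of ``pure translation'' and ``pure dilation'' defined intrinsically in $L'$ really do correspond, after the conjugation $C_t=V_t$ carrying $\mathfrak{L}'$ to $\mathfrak{L}_t$ and the passage to the limit $L_0$, to the two coordinate axes in the $(x,y)$-plane of \eqref{e:L_0toP}, with no mixing. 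This requires carefully tracking how $V_t$ and \eqref{e:conjfamily} act on the distinguished one-parameter subgroups of $\mathfrak{L}'$ and confirming that the translation part and the dilation part land on complementary directions in $\mathfrak{L}_0$; once that correspondence is nailed down, the computation above closes the proof.
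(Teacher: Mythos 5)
Your core argument is the same as the paper's: pure translations and pure dilations occupy complementary coordinate lines in the Lie algebra (in the $(a,b)$-coordinates of \eqref{e:liealgebramap}, $a=0$ for translations and $b=0$ for dilations), so passing to the limit $t\to 0$ produces elements of $L_0$ with, in the notation of \eqref{e:cuspshape}, $x_1=0$ and $y_2=0$, and substitution into \eqref{e:cuspshape} gives the purely imaginary cusp shape $-ix_2/y_1$. This final computation is exactly the paper's, which obtains $-i\nu_0/\mu_0$.

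The one place your outline deviates --- and where it would run into trouble as written --- is the passage through $L'$, Proposition \ref{p:Ltconvergence}, and ``differentiating at $t=0$.'' Proposition \ref{p:Ltconvergence} requires the curves $a_t,b_t$ to be differentiable at $0$ with linearly independent derivatives, but the proposition you are proving supplies only a family $\rho_t$ with $\rho_0=\rhogeo$; continuity is all you may assume, so the differentiation step is an assumption you are not entitled to. The detour is also circular: the hypothesis already places $\rho_t(m)$ and $\rho_t(l)$ in $L_t$, so conjugating into $L'$ by $V_t^{-1}$ and back again via \eqref{e:conjfamily} merely reproduces the family you started with. The paper sidesteps both issues by working directly in $\mathfrak{L}_t$: a pure translation in $L_t$ is $\exp$ of the element with parameters $(r,s)=(0,\mu_t)$ and a pure dilation is $\exp$ of the element with $(r,s)=(\nu_t,0)$, and continuity of $t\mapsto\rho_t$ alone forces $\mu_t\to\mu_0$ and $\nu_t\to\nu_0$, the limits being the elements of $L_0$ with $(x_1,y_1)=(0,\mu_0)$ and $(x_2,y_2)=(\nu_0,0)$. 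This also settles your ``no mixing'' worry without any bookkeeping: the conditions $r=0$ and $s=0$ are visibly preserved in the limit $t\to 0$, so no tracking of the distinguished subalgebras through $V_t$ is needed. Replace the differentiation step by this direct observation and your argument becomes the paper's proof.
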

\begin{proof}
 Since $\rho_t(m)$ is a pure translation and $\rho_t(l)$ is a pure dilation, we see that there exists functions $\mu_t$ and $\nu_t$, continuous near zero, such that 
 $$\rho_t(m)=\exp\left(\begin{pmatrix}
              0 & 0 & \mu_t & 0\\
              0 & 0 & 0 & 0\\
              0 & 0 & 0 & \mu_t\\
              0 & 0 & 0 & 0
             \end{pmatrix}\right) \textrm{ and }
\rho_t(l)=\exp\left(\begin{pmatrix}
                     0 & \nu_t & 0 & 0\\
                     0 & \nu_t t & 0 & \nu_t\\
                     0 & 0 & 0 & 0\\
                     0 & 0 & 0 & 0
                    \end{pmatrix}
\right).$$
Thus if we let $\mu_0=\lim_{t\to 0}\mu_t$ and $\nu_0=\lim_{t\to0}\nu_t$, then we see that 
$$\rho_0(m)=\exp\left(\begin{pmatrix}
                       0 & 0 & \mu_0 & 0\\
                       0 & 0 & 0 & 0\\
                       0 & 0 & 0 & \mu_0\\
                       0 & 0 & 0 & 0
                      \end{pmatrix}
\right)=\begin{pmatrix}
         1 & 0 & \mu_0 & \frac{1}{2}\mu_0^2\\
         0 & 1 & 0 & 0\\
         0 & 0 & 1 & \mu_0\\
         0 & 0 & 0 & 1
        \end{pmatrix} \textrm{ and }$$         
$$\rho_0(l)=\exp\left(\begin{pmatrix}
                     0 & \nu_0 & 0 & 0\\
                     0 & 0 & 0 & \nu_0\\
                     0 & 0 & 0 & 0\\
                     0 & 0 & 0 & 0
                    \end{pmatrix}
\right)=\begin{pmatrix}
         1 & \nu_0 & 0 & \frac{1}{2}\nu_0^2\\
         0 & 1 & 0 & \nu_0\\
         0 & 0 & 1 & 0\\
         0 & 0 & 0 & 1
        \end{pmatrix}
.$$
From \eqref{e:cuspshape} we see that the cusp shape of $C$ relative to $\{m,l\}$ is $-i\frac{\nu_0}{\mu_0}$, and is thus imaginary. 
\qed
\end{proof}

In \cite{Ballas12a} it is shown that the complements of the knots $5_2$ and $6_1$ in $S^3$ do not admit a families $\rho_t$ of representations passing through $\rhogeo$  such that the image under $\rho_t$ of the meridian is a pure translation and the image under $\rho_t$ of the longitude is a pure dilation. On the other hand, we have seen that the figure-eight knot complement does admit a family $\rho_t$ of representations passing through $\rhogeo$ such that the image under $\rho_t$ of the meridian is a pure translation and the image under $\rho_t$ of the longitude is a pure dilation. The cusp shape of $5_2$ and $6_1$ are not purely imaginary, but the cusp shape of the figure-eight knot is purely imaginary. Proposition \ref{p:cuspshapeobstruction} can be seen as a partial explanation of this phenomenon.  

\section{Volume of Cusps}\label{s:volume}

In this section we will examine the Busemann volume of cusps that are projectively equivalent to $C_\Gamma=D'/\Gamma$, where $\Gamma$ is lattice inside $L'$. While $C_\Gamma$ will always have infinite volume, we will show that $C_\Gamma$ admits an exhaustion by sets of finite Busemann volume. The domain $D'$ can be written as 
$$\bigcup_{s>0}\mathcal{H}'_s,$$
(see Figure \ref{f:domain1}). For each $k>0$ we let $D'_k=\cup_{s>k}\mathcal{H}'_s$, and it is easy to see that  $\{D'_k\}_{k>0}$ is an exhaustion of $D'$ by horoballs. Each $D_k'$ is preserved by $L'$ and the main result of this section is that some (hence any) fundamental domain for the action of $\Gamma$ on $D'_k$ has finite Busemann volume when regarded as a subset of $D'$.

As mentioned in Section \ref{s:Z^2}, $\mathcal{H}'_k/\Gamma$ is an affine torus whose developing map is given by the restriction of $[x_1:x_2:x_3:1]\mapsto [x_2:x_3:1]$ to $\mathcal{H}'_k$. Let $R$ be a compact fundamental domain for the affine action of $\Gamma$ on affine 2-space. Then a fundamental domain for $D'_k/\Gamma$ can be taken to be 
$$\mathcal{D}_k=\{[x_1:x_2:x_3:1]\mid (x_2,x_3)\in R, x_1>0\}\cap D'_k.$$
Geometrically, $\mathcal{D}_k$ is the intersection of $D'_k$ with the cone over $R$ whose cone point is ${[1:0:0:0]}$.

\begin{figure}
 \begin{center}
  \includegraphics[scale=.6]{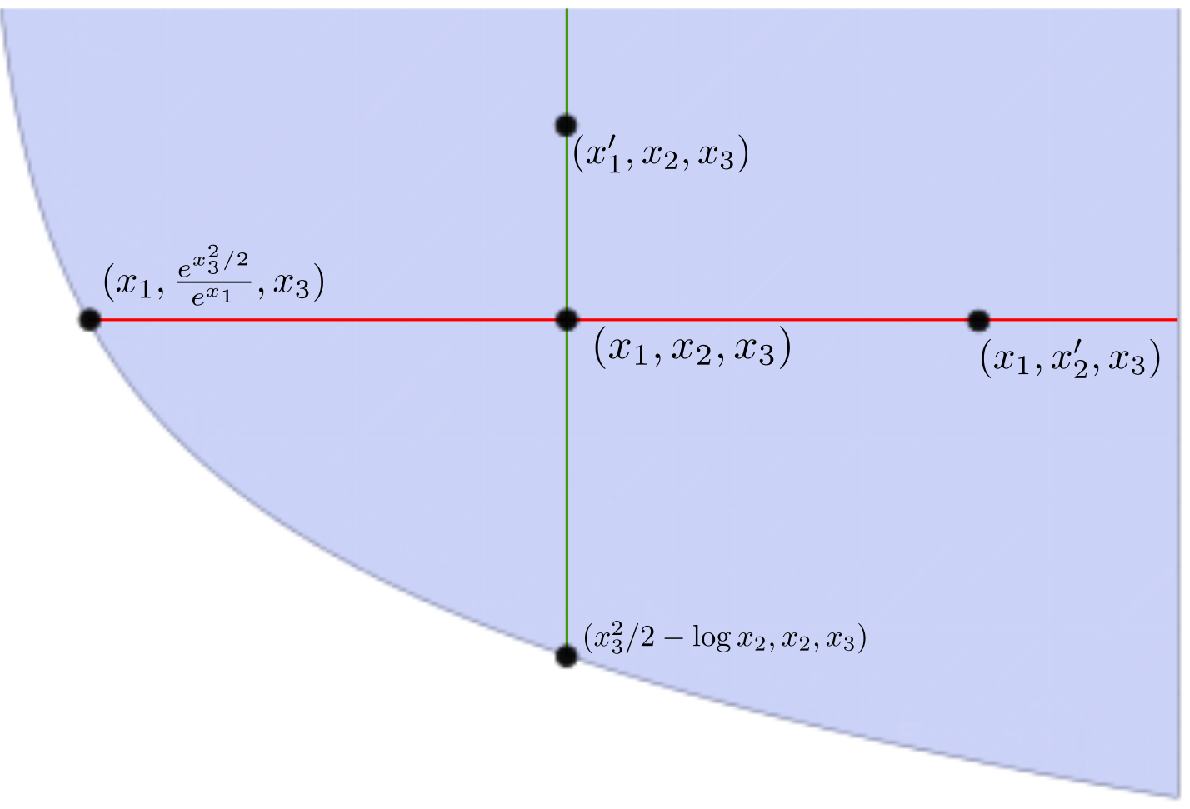}\hspace{.5in}
  \includegraphics[scale=.8]{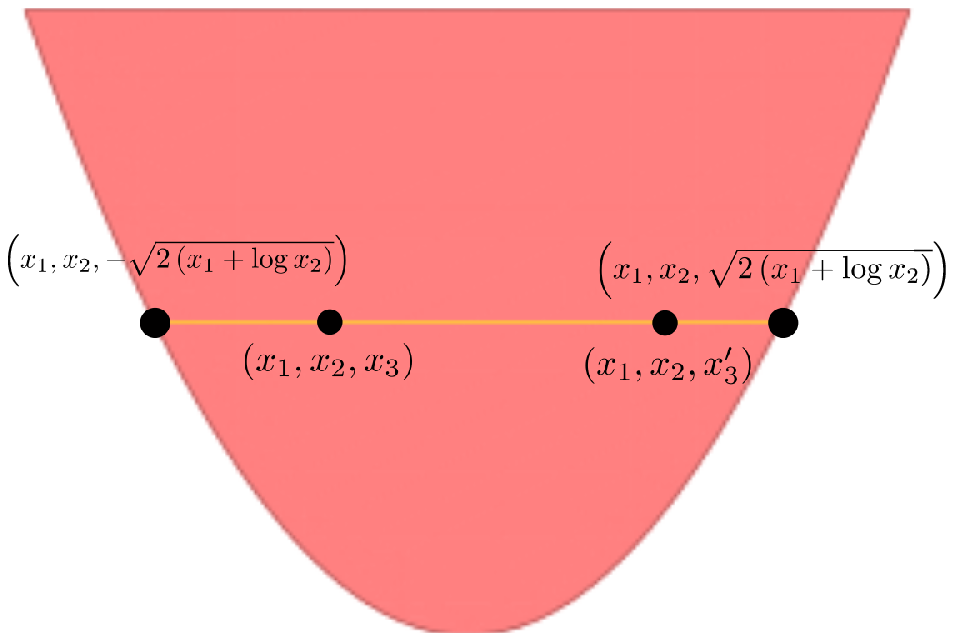}
  \caption{Slices of $D'$ in the $x_2$ and $x_3$ directions\label{f:slices}}
 \end{center}
 \end{figure}

\begin{lemma}\label{l:euclideanvolume}
 Let $\Gamma$ be a lattice in $L'$, let $k>0$, and let $x=[x_1:x_2:x_3:1]$ be a point in $\mathcal{D}_k$. There exist constants $C$ and $N$ (depending only on $\Gamma$) such that if $x_1>N$ then $Cx_1^{3/2}<\mu_L(B^{D'}_x(1)).$ 
\end{lemma}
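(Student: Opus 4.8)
The plan is to bound the ball below by an inscribed cross-polytope built from its three on-axis radii. By \eqref{e:hilbertnorm} the Hilbert norm satisfies $\Abs{v}_{D'}=\abs{v}\left(\frac{1}{d_-(v)}+\frac{1}{d_+(v)}\right)$, where $d_\pm(v)$ are the Euclidean distances from $x$ to $\partial D'$ along $\pm v$; hence in the direction of a unit vector $v$ the ball $B^{D'}_x(1)$ reaches Euclidean radius $r(v)=\frac{d_+(v)d_-(v)}{d_+(v)+d_-(v)}$. Since $\Abs{\cdot}_{D'}$ is a norm, $B^{D'}_x(1)$ is a convex, centrally symmetric body, so it contains the cross-polytope $\mathrm{conv}\{\pm r_1e_1,\pm r_2e_2,\pm r_3e_3\}$, where $r_i:=r(e_i)$ and $e_1,e_2,e_3$ is the standard basis identified with coordinates on $T_xD'$. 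This cross-polytope has Lebesgue volume $\frac43 r_1r_2r_3$, so $\mu_L(B^{D'}_x(1))\ge\frac43 r_1r_2r_3$, and it suffices to establish the three one-dimensional estimates $r_1\gtrsim x_1$, $r_2\gtrsim 1$ and $r_3\gtrsim\sqrt{x_1}$; their product is $\gtrsim x_1^{3/2}$.

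Because $R$ is a compact subset of $\{x_2>0\}$, there are constants $0<c_1\le c_2$ and $M_0>0$, depending only on $\Gamma$, with $x_2\in[c_1,c_2]$ and $\abs{x_3}\le M_0$ for every $x\in\mathcal{D}_k$; in particular the boundary value $\phi(x_2,x_3):=\frac{x_3^2}{2}-\log x_2$ stays in a fixed bounded interval. I would then read off each $r_i$ from the defining description $D'=\{x_1>\phi(x_2,x_3),\ x_2>0\}$. Along $e_1$ the downward ray meets $\partial D'$ at distance $x_1-\phi(x_2,x_3)$, while the upward ray never leaves $D'$ and exits only at the ideal point $[1:0:0:0]$, so $d_+=\infty$ and $r_1=x_1-\phi(x_2,x_3)\ge x_1/2$ once $x_1$ exceeds a threshold $N$. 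Along $e_2$ the ray of increasing $x_2$ likewise stays in $D'$ for all time and exits only at the ideal boundary, so $d_+=\infty$ and $r_2=x_2-e^{\,x_3^2/2-x_1}\ge c_1/2$ once $x_1>N$. Finally, along $e_3$ both rays meet $\partial D'$, at distances $L\mp x_3$ with $L=\sqrt{2(x_1+\log x_2)}$; since $L\ge\sqrt{x_1}$ for large $x_1$ and both distances lie in $[L-M_0,L+M_0]$, the harmonic-mean radius obeys $r_3\ge\frac{(L-M_0)^2}{2(L+M_0)}\gtrsim\sqrt{x_1}$. Multiplying the three bounds yields $\mu_L(B^{D'}_x(1))\ge\frac43 r_1r_2r_3\ge C x_1^{3/2}$ for all $x_1>N$, with $C$ and $N$ depending only on $\Gamma$ through $c_1,c_2,M_0$.

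The only points that require genuine care are the two escaping directions $+e_1$ and $+e_2$, where I claim $d_+=\infty$: this I would justify by noting that $\phi$ is constant along the $+e_1$ ray and strictly decreasing along the $+e_2$ ray, so the defining inequality $x_1>\phi$ persists for all positive parameter values and the ray stays inside $D'$ (exiting only at the ideal boundary, consistent with the proper but non-strict convexity established in Lemma \ref{l:propconvex}). The passage from the three axis vectors to the inscribed cross-polytope uses convexity of the Hilbert unit ball, i.e.\ that \eqref{e:hilbertnorm} genuinely defines a norm, which is the standard fact that the Hilbert metric is Finsler. The harmonic-mean estimate for $r_3$ and the collection of the explicit constants $C$ and $N$ are then routine.
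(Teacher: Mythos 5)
Your proposal is correct and follows essentially the same route as the paper: the paper also computes the Hilbert norm along the three coordinate directions (finding radii of order $x_1$, constant, and $\sqrt{x_1}$, with the vertical and $x_2$ rays exiting only at the ideal boundary) and then inscribes a polytope in the convex unit ball — a tetrahedron $\mathrm{conv}\{x,v^1,v^2,v^3\}$ rather than your centrally symmetric cross-polytope, which changes only the constant. The compactness of the fundamental domain $R$ is used identically in both arguments to control $x_2$, $x_3$, and hence the boundary terms.
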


\begin{proof}
 We write $x=(x_1,x_2,x_3)$ as a vector in $\R^3$.  The basic idea of the proof is to show that when $x_1$ is large that $B_x^{D'}(1)$ contains a simplex of large Lebesgue volume. Let $v^2_c=x+ce_2$. From Figure \ref{f:slices} and \eqref{e:hilbertnorm} we see that
 \begin{equation}\label{e:distform1}
  \Abs{v^2_c}_{D'}=\left.\frac{d}{dt}\right|_{t=0}\log\left(\frac{x_2+ct-k_1}{x_2-k_1}\right)=\frac{c}{x_2-k_1},
 \end{equation}
where $k_1=k_1(x_1,x_3):=\frac{e^{x_3^2/2}}{e^{x_1}}$. Since $(x_2,x_3)\in R$ and $R$ is compact we know that there are positive constants $c_1,c_2$, and $c_3$ such that $c_1<x_2<c_2$ and $-c_3<x_3<c_3$. Combining this fact with \eqref{e:distform1} we see that we can find a small positive constant $T$ such that when $x_1$ is sufficiently large $\Abs{v^2_T}_{D'}<1$. As a consequence $v^2_T\in B_x^{D'}(1)$. 

Let $v^1_c=x+ce_1$. From Figure \ref{f:slices} and \eqref{e:hilbertnorm} we see that 
\begin{equation}\label{e:distform2}
 \Abs{v^1_c}_{D'}=\left.\frac{d}{dt}\right|_{t=0}\log\left(\frac{x_1+ct-k_2}{x_1-k_2}\right)=\frac{c}{x_1-k_2},
\end{equation}
where $k_2=k_2(x_2,x_3):=\frac{1}{2}x_3^2-\log x_2$. When $x_1$ is sufficiently large we see that $\frac{x_1}{2(x_1-k_2)}<1$ and so $v^1_{x_1/2}\in B^{D'}_x(1)$. 

Finally, let $v^3_c=x+ce_3$. From Figure \ref{f:slices} and \eqref{e:hilbertnorm} we see that
\begin{equation}\label{e:distform3}
 \Abs{v^3_c}_{D'}=\left.\frac{d}{dt}\right|_{t=0}\log\left(\frac{(x_3+ct+k_3)(k_3-x_3)}{(x_3+k_3)(k_3-x_3-ct)}\right)=c\left(\frac{1}{(x_3+k_3)}+\frac{1}{(k_3-x_3)}\right),
\end{equation}
 where $k_3=k_3(x_1,x_2):=\sqrt{2\left(x_1+\log x_2\right)}$. When $x_1$ is sufficiently large we see that $\frac{\sqrt{x_1}}{3\sqrt{2}}\left(\frac{1}{(x_3+k_3)}+\frac{1}{(k_3-x_3)}\right)=\frac{\sqrt{x_1}}{3\sqrt{2}}\left(\frac{2k_3}{k_3^2-x_3^2}\right)<1$ and so $v^3_{\sqrt{x_1}/3\sqrt{2}}\in B^{D'}_x(1)$.
 
 Thus for sufficiently large $x_1$, we see that $B^{D'}_x(1)$ contains $v^2_{T}$, $v^1_{x_1/2}$ and $v^3_{\sqrt{x_1}/3\sqrt{2}}$. Since $B^{D'}_x(1)$ is the unit ball for a norm we see that it is convex and thus it contains the convex hull of the set $\left\{x,v^2_T,v^1_{x_1/2},v^3_{\sqrt{x_1}/3\sqrt{2}}\right\}$. This convex hull is a tetrahedron and its Lebesgue measure is easily computed to be $\frac{x_1^{3/2}}{36\sqrt{2}}T$ and we conclude that when $x_1$ is sufficiently large $\frac{x_1^{3/2}}{36\sqrt{2}}T<\mu_L(B_x^{D'}(1))$
 
 \qed 
\end{proof}

We can now prove the main result of this section.

\begin{proposition}\label{t:finitevolume}
 $\mathcal{D}_k$ has finite Busemann volume when regarded as a subset of $D'$. 
\end{proposition}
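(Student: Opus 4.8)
The plan is to evaluate the Busemann volume through the density formula \eqref{e:hilbertvolume}, which for $n=3$ gives
\[
\hilbvol[D']{\mathcal{D}_k}=\int_{\mathcal{D}_k}\frac{\alpha_3}{\mu_L(B^{D'}_x(1))}\,d\mu_L(x),
\]
and to show that this integrand is Lebesgue-integrable over $\mathcal{D}_k$. Using the description of $D'$ from Section \ref{s:Z^2}, I would first write $\mathcal{D}_k$ explicitly as $\{(x_1,x_2,x_3)\mid (x_2,x_3)\in R,\ x_1>F(x_2,x_3)+k\}$, where $F(x_2,x_3)=\frac{x_3^2}{2}-\log x_2$ is the boundary function and $R$ is the compact fundamental domain for the affine $\Gamma$-action. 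I would then split $\mathcal{D}_k$ along the hyperplane $x_1=N$, where $N$ is the constant furnished by Lemma \ref{l:euclideanvolume}, and treat the two pieces separately.

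On the bounded piece $\mathcal{D}_k\cap\{x_1\le N\}$ the point is that its closure is a compact subset of $D'$: the pair $(x_2,x_3)$ ranges over the compact set $R$ (so $x_2$ in particular is bounded away from $0$), while $x_1$ is trapped between $F(x_2,x_3)+k$ and $N$; since $k>0$, the inequality $x_1\ge F(x_2,x_3)+k>F(x_2,x_3)$ keeps the closure off $\partial D'$. The density $x\mapsto \alpha_3/\mu_L(B^{D'}_x(1))$ is continuous on $D'$ (it is the change-of-variables factor relating $\mu_L$ and $\mu^x_{D'}$, each of which depends continuously on $x$), hence bounded on this compact set; as the piece also has finite Lebesgue measure, it contributes a finite amount to the integral.

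On the unbounded piece $\mathcal{D}_k\cap\{x_1>N\}$ I would invoke Lemma \ref{l:euclideanvolume} directly: there $\mu_L(B^{D'}_x(1))>Cx_1^{3/2}$, so the integrand is dominated by $\alpha_3 C^{-1}x_1^{-3/2}$. Since this bound is nonnegative, Tonelli's theorem allows integration in $x_1$ first for each fixed $(x_2,x_3)\in R$; the inner integral $\int_{L}^{\infty}x_1^{-3/2}\,dx_1=2/\sqrt{L}$ converges because $3/2>1$, and the lower limit $L=\max(N,F(x_2,x_3)+k)\ge N$ yields the uniform bound $2/\sqrt{N}$. Integrating this over the compact, hence finite-area, set $R$ gives a finite total, and combining the two pieces yields $\hilbvol[D']{\mathcal{D}_k}<\infty$.

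The substantive work has already been done in Lemma \ref{l:euclideanvolume}; the remaining obstacle is organizational, namely verifying that the density stays bounded on the truncated bottom of the cusp. This is exactly where the horoball truncation by $k>0$ is essential: it keeps $\mathcal{D}_k$ uniformly off $\partial D'$, where the Hilbert norm \eqref{e:hilbertnorm} diverges and the density degenerates. The convergence of the tail is tight, in the sense that it relies on the decay exponent $3/2$ exceeding $1$, which is precisely the content of the sharp volume estimate of the previous lemma.
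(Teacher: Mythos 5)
Your proof is correct and takes essentially the same route as the paper's: both evaluate the volume via the density formula \eqref{e:hilbertvolume}, split $\mathcal{D}_k$ into a compact piece (where the density is bounded) and the tail where Lemma \ref{l:euclideanvolume} applies, and conclude from the integrability of $x_1^{-3/2}$ over the tail. The paper's version is merely terser, leaving implicit the compactness of the truncated bottom piece and the Tonelli computation that you write out in full.
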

\begin{proof}
 From \eqref{e:hilbertvolume} we see that $\hilbvol[D']{\mathcal{D}_k}=\int_{\mathcal{D}_k}\frac{d\mu_L(x)}{\mu_L(B^{D'}_x(1))}$. By Lemma \ref{l:euclideanvolume} we know that there exists a compact set $K\subset \mathcal{D}_k$ and a constant $C$ such that $Cx_1^{3/2}<\mu_L(B^{D'}_x(1))$ for every $x=(x_1,x_2,x_3)\in \mathcal{D}_k\bs K$. Thus we see that 
 $$\hilbvol[D']{\mathcal{D}_k}=\int_K\frac{d\mu_L(x)}{\mu_L(B^{D'}_x(1))}+\int_{\mathcal{D}_k\bs K}\frac{d\mu_L(x)}{\mu_L(B^{D'}_x(1))}<\int_K\frac{d\mu_L(x)}{\mu_L(B^{D'}_x(1))}+\int_{\mathcal{D}_k\bs K}\frac{d\mu_L(x)}{Cx_1^{3/2}}<\infty$$
\qed 
\end{proof}

\section{Figure-eight example}\label{s:fig8}

In this section we use the ideas from the previous section along with work of \cite{CooperLongTillman13} to exhibit an explicit path of pairwise inequivalent finite volume properly convex projective structures that passes through the complete hyperbolic structure of the figure-eight knot complement. 

Let $M$ be the complement of the figure-eight knot. Next, choose a basepoint $y\in M$, and let $\Gamma=\fund{M}$ with respect to this choice of basepoint. The group $\Gamma$ is generated by two elements $m$ and $n$, which are freely homotopic to meridians of the knot. Explicitly, the group $\Gamma$ can be written as 
$$\Gamma=\langle m,n\vert mw=wn\rangle,$$
where $w=nm^{-1}n^{-1}m$. As mentioned in Section \ref{s:obstruction}, $M$ can be written as $M_K\cup C$ where $M_K$ is compact and $C$ is diffeomorphic to $T^2\times [0,\infty)$ and we let $\Delta=\fund{C}$ be a choice of peripheral subgroup. 

The complete hyperbolic structure of $M$ induces a representation $\rhogeo:\Gamma\to \text{P}\so{3}\subset \pgl$. Under this representation, $\rhogeo(\Delta)$ is conjugate to a lattice inside $L_0$ as described in Section \ref{s:Z^2}.  For the figure-eight knot complement, the group $\Delta$ can be generated by $m$ and the element $l=ww^{op}=nm^{-1}n^{-1}m^2n^{-1}m^{-1}n$, where $w^{op}$ is the word $w$ written backwards. It is easy to see that $l$ is homologically trivial and that $l$ corresponds to a longitude of the knot.

In \cite{Ballas12a} an explicit family, $\rho_t$, of representations from $\Gamma$ into $\pgl$ is found for which $\mathcal{M}_t=\rho_t(m)$ and $\mathcal{N}_t=\rho_t(n)$ are both unipotent matrices and $\rho_{1/2}$ is the holonomy of the complete hyperbolic structure on $M$. Additionally, $\mathcal{L}_t=\rho_t(l)$ is unipotent if and only if $t=\frac{1}{2}$. We now show that the restriction of $\rho_t$ to the peripheral subgroup is the holonomy of a properly convex projective structure on $C$ which converges to the hyperbolic structure on $C$ coming from $\rho_{1/2}$. Specifically, $\rho_t$ is given by  

$$\mathcal{M}_t=\begin{pmatrix}
       1 & 0 & 1 & t-1\\
       0 & 1 & 1 & t\\
       0 & 0 & 1 & t+\frac{1}{2}\\
       0 & 0 & 0 & 1
      \end{pmatrix} \textrm{ and }
  \mathcal{N}_t=\begin{pmatrix}
       1 & 0 & 0 & 0\\
       2+\frac{1}{t} & 1 & 0 & 0\\
       2 & 1 & 1 & 0\\
       1 & 1 & 0 & 1
      \end{pmatrix}$$
$$\mathcal{L}_t=\begin{pmatrix}
       \frac{8t^3-4t^2-2t-1}{8t^2} & \frac{8t^3+4t^2+2x+1}{8t^2} & \frac{-4t^2-1}{4t^2} & \frac{40t^3+24t^2+4t+3}{8t^2}\\
       \frac{8t^4-4t^3-2t^2-t-1}{8t^3} & \frac{8t^4+4t^3+2t^2+t+1}{8t^3} & \frac{4t^3-4t^2+t-1}{4t^3} & \frac{56t^4+16t^3+20t^2+t+3}{8t^3}\\
       0 & 0 & 2t & 0 \\
       0 & 0 & 0 & 2t
      \end{pmatrix}.
$$

Let $\Delta_t=\rho_t(\Delta)=\langle \mathcal{M}_t,\mathcal{L}_t\rangle$. We now replace $\mathcal{L}_t$ with the projectively equivalent matrix $\frac{1}{2t}\mathcal{L}_t$. After performing the conjugacy described in the proof of Lemma \ref{l:liealgconjugacy} and applying the coordinate change $s=\log\left(\frac{1}{16t^4}\right)$ we see that $\mathcal{M}_s$ and $\mathcal{L}_s$ are conjugate to 
$$\begin{pmatrix}
       1 & 0 & \sqrt{\frac{s\sinh(s/4)}{3}} & \frac{s\sinh(s/4)}{6}\\
       0 & 1 & 0 & 0\\
       0 & 0 & 1 & \sqrt{\frac{s\sinh(s/4)}{3}}\\
       0 & 0 & 0 & 1
      \end{pmatrix}\textrm {and}
      \begin{pmatrix}
       1 & 0 & 0 & -s\\
       0 & e^s & 0 & 0\\
       0 & 0 & 1 & 0\\
       0 & 0 & 0 & 1
      \end{pmatrix}$$
respectively. We can further conjugate $\mathcal{M}_s$ and $\mathcal{L}_s$ to the matrices

\begin{equation}\label{e:fig8cuspconvergence}
\mathcal{M}'_s=\begin{pmatrix}
       1 & 0 & \sqrt{\frac{\sinh(s/4)}{3s}} & \frac{\sinh(s/4)}{6s}\\
       0 & 1 & 0 & 0\\
       0 & 0 & 1 & \sqrt{\frac{\sinh(s/4)}{3s}}\\
       0 & 0 & 0 & 1
      \end{pmatrix} \textrm{ and }
\mathcal{L}'_s=\begin{pmatrix}
       1 & \frac{e^s-1}{s} & 0 & \frac{e^s-s-1}{s^2}\\
       0 & e^s & 0 & \frac{e^s-1}{s}\\
       0 & 0 & 1 & 0\\
       0 & 0 & 0 & 1
      \end{pmatrix}.
\end{equation}
A Mathematica notebook containing these calculations can be found at \cite{BallasNotebook}.

From the discussion in Section \ref{s:Z^2} we see that for each $s\neq 0$ that $\Delta_s$ is conjugate to the group $\Delta_s'=\langle \mathcal{M}'_s,\mathcal{L}'_s \rangle$ that preserves $D_s$ and such that $D_s/\Delta'_s\cong C$. Thus we see that the restriction of $\rho_s$ to $\Delta$ gives $C$ the structure of a generalized cusp.

As $s\to 0$ ($t\to\frac{1}{2}$) $\mathcal{M}'_s$ and $\mathcal{L}'_s$ converge to the matrices
\begin{equation}\label{e:limitmatrix}
\mathcal{M}_0=\begin{pmatrix}
           1 & 0 & \frac{1}{2\sqrt{3}} & \frac{1}{24}\\
           0 & 1 & 0 & 0\\
           0 & 0 & 1 & \frac{1}{2\sqrt{3}}\\
           0 & 0 & 0 & 1
          \end{pmatrix}\textrm{ and }
          \mathcal{L}_0=\begin{pmatrix}
           1 & 1 & 0 & \frac{1}{2}\\
	   0 & 1 & 0 & 1\\
	   0 & 0 & 1 & 0\\
	   0 & 0 & 0 & 1
          \end{pmatrix}. 
\end{equation}
In other words, as $s\to0$ the group $\Delta'_s$ is converging to a lattice $\Delta_0=\langle \mathcal{M}_0,\mathcal{L}_0 \rangle \leq L_0$  and $D_s/\Delta'_s$ is converging to a hyperbolic cusp $D_0/\Delta_0$ (see Figure \ref{f:domaindeformation}).  
By looking at the entries of the matrices in \eqref{e:limitmatrix} and applying the formula \eqref{e:cuspshape} we see the cusp shape of the limit structure is $-2\sqrt{3} i$, which is the cusp shape of of the figure-eight knot. Thus the projective structure on $C$ coming from $\rho_s$ is converging the hyperbolic structure on $C$ coming from $\rho_{0}$. 

The next result shows that for small values of $s$, $\rho_s$ is the holonomy of a properly convex projective structure.

\begin{proposition}\label{p:convexstructure}
 There exists $\varep>0$ such that for $s\in (-\varep,\varep)$, $\rho_s$ is the holonomy of a properly convex structure on $M$. Furthermore, this structure is strictly convex if and only if $s=0$.
\end{proposition}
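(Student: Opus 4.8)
The plan is to obtain proper convexity from the Cooper--Long--Tillmann gluing theorem (Theorem \ref{t:holonomytheorem}), using the complete hyperbolic structure as the base point and the explicit cusp analysis above to check its hypotheses; the strict convexity dichotomy will then follow from the fact that for $s\neq 0$ the cusp develops into the non-strictly-convex domain $D_s$ of Lemma \ref{l:propconvex}.

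First I would record the base data. Recall that $s=0$ corresponds to $t=1/2$, so $\rho_0=\rhogeo$ is the holonomy of the complete hyperbolic structure on $M$, which is a properly (indeed strictly) convex projective structure. The decomposition $M=M_K\cup C$ of Section \ref{s:obstruction} supplies exactly the splitting required by Theorem \ref{t:holonomytheorem}: set $A=M_K$, which is compact, and $\mathcal{B}=C$, a single generalized cusp whose boundary is strictly convex. Thus the hyperbolic structure meets the standing hypotheses of the theorem.

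Next I would verify the two conditions the theorem imposes on a nearby representation $\rho_s$. For the cusp condition, the computation preceding this proposition is exactly what is needed: for $s\neq 0$ the group $\Delta_s=\rho_s(\Delta)$ is conjugate to $\Delta'_s=\langle \mathcal{M}'_s,\mathcal{L}'_s\rangle$, which preserves $D_s$ and satisfies $D_s/\Delta'_s\cong C$, so $\rho_s|_\Delta$ is the holonomy of a generalized cusp structure on $C$; for $s=0$ this structure is the hyperbolic cusp $D_0/\Delta_0$. For the closeness condition, the entries of $\mathcal{M}_t$ and $\mathcal{N}_t$ are rational in $t$ and $s=\log(1/(16t^4))$ is a diffeomorphism near $t=1/2$, so $\rho_s$ varies continuously and $\rho_s\to\rho_0$ as $s\to 0$. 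Hence there is $\varepsilon>0$ so that for every $s\in(-\varepsilon,\varepsilon)$ both hypotheses hold, and Theorem \ref{t:holonomytheorem} yields a properly convex projective structure on $M$ with holonomy $\rho_s$; write $M=\Omega_s/\rho_s(\Gamma)$ for its developing image $\Omega_s$.

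Finally I would establish the strict convexity claim. When $s=0$ the structure has holonomy $\rhogeo$, and since $hol$ is a local homeomorphism (Theorem \ref{t:thurstonholonomy}) the structure near the hyperbolic one with this holonomy is the hyperbolic structure itself, whose developing image is an ellipsoid and hence strictly convex. When $s\neq 0$, the universal cover of the cusp develops onto $D_s$ (because $D_s/\Delta'_s\cong C$), so $D_s\subseteq\Omega_s$; by Lemma \ref{l:propconvex} the boundary $\partial D_s$ contains a line segment $\sigma$, and since $\mathcal{L}'_s$ is non-unipotent and preserves both $D_s$ and $\Omega_s$ together with the supporting hyperplane of $D_s$ along $\sigma$, the segment $\sigma$ persists in $\partial\Omega_s$. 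Therefore $\Omega_s$ is not strictly convex, giving the stated equivalence. The step I expect to be the main obstacle is this last one: transferring the failure of strict convexity from the model cusp domain $D_s$ to the global domain $\Omega_s$, that is, confirming that the boundary segment of $D_s$ genuinely survives in $\partial\Omega_s$ rather than being absorbed into the interior by the gluing. By contrast, the verification of the closeness and cusp hypotheses is essentially bookkeeping built on the explicit matrices and the results of Section \ref{s:Z^2}.
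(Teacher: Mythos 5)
Your derivation of proper convexity is essentially the paper's own argument: both use the decomposition $M=M_K\cup C$, the identification (from Section \ref{s:Z^2} and the preceding computation) of $\rho_s(\Delta)$ with a lattice $\Delta'_s$ in $L'$ giving $C$ a generalized cusp structure with holonomy $\rho_s|_{\Delta}$, continuity of $s\mapsto\rho_s$, and then Theorem \ref{t:holonomytheorem}. That half is correct and needs no further comment.

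The strict convexity half is where you diverge from the paper, and your version has a genuine gap --- precisely the step you flagged. First, the inclusion $D_s\subseteq\Omega_s$ does not follow from $D_s/\Delta'_s\cong C$: the cusp of the structure produced by Theorem \ref{t:holonomytheorem} is \emph{some} convex projective structure on $C$ with holonomy $\Delta'_s$, but its developing image is only some $\Delta'_s$-invariant convex subset of $\Omega_s$, not necessarily $D_s$ itself (structures with the same holonomy can have different developing images, e.g.\ $D_s$ versus the $L'$-horoballs inside it). Indeed, when the paper needs this kind of control it proves only a two-sided sandwich $\mathcal{B}\subset\Omega_s\subset\mathcal{B}'$ by $L'$-horoballs (Lemma \ref{l:horoapprox}), via an equivariance argument with boundary functions --- and that lemma appears \emph{after} this proposition, so you cannot invoke anything like it for free. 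Second, even granting a horoball inside $\Omega_s$, your justification that the segment ``persists'' in $\partial\Omega_s$ --- invariance of $D_s$, $\Omega_s$, and a supporting hyperplane under the non-unipotent $\mathcal{L}'_s$ --- is an assertion, not an argument: you must rule out that the segment is absorbed into the interior of $\Omega_s$. This can be repaired (the outer inclusion $\Omega_s\subset\mathcal{B}'$ of Lemma \ref{l:horoapprox} does it, since $\mathcal{B}$ and $\mathcal{B}'$ have the same ideal boundary segment; alternatively, each point $[c:1:0:0]$ of the segment is fixed by the infinite-order pure translation $\mathcal{M}'_s$, hence cannot lie in $\Omega_s$ by proper discontinuity of the $\Gamma_s$-action), but some such argument is required. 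The paper sidesteps all of this with a purely algebraic step: for $s\neq 0$ the matrix $\mathcal{L}_s$ has two distinct positive real eigenvalues, hence is not projectively equivalent to a parabolic, and by \cite[Prop 3.2.4]{BallasThesis} the holonomy of a strictly convex projective structure on $M$ cannot contain such a peripheral element. You should either adopt that argument or supply the missing geometric lemma before using the segment of Lemma \ref{l:propconvex}.
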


\begin{proof}
When $s=0$ then $\rho_0$ is the holonomy of the complete hyperbolic structure on $M$. As a result the $\rho_0$ is the holonomy of a strictly convex projective structure on $M$. From \eqref{e:hyperbolicdecomp} we see that $M=A\cup B$ where $A$ compact and $B$ has the structure of a generalized cusp. When $s\neq 0$ we have seen that $\Delta_s=\rho_s(\fund{B})$ is conjugate to a lattice in $L'$. If $\mathcal{B}$ is an $L'$-horoball then $\Delta_s$ is the holonomy of the generalized cusp $\mathcal{B}/\Delta_s$. By applying Theorem \ref{t:holonomytheorem} we see that for sufficiently small $s$, $\rho_s$ will the the holonomy of a properly convex projective structure on $M$. 

When $s\neq 0$, a simple calculation shows that  the matrix $\mathcal{L}_s$ has two distinct positive real eigenvalues and is thus not projectively equivalent to a parabolic transformation. By \cite[Prop 3.2.4]{BallasThesis} we see that this implies that $\rho_s$ cannot be the holonomy of a strictly convex projective structure on $M$. 

\qed 
\end{proof}

From Proposition \ref{p:convexstructure} we know that for small values of $s$, $\rho_s$ is the holonomy of a properly convex projective structure on $M$. To complete the proof of Theorem \ref{t:maintheorem} it remains to prove that this structure has finite Busemann volume. 

Let $\Omega_s$ be the properly convex domain preserved by $\Gamma_s:=\rho_s(\Gamma)$. Assume that we have conjugated so that $\Delta_s\subset L'$ and choose parabolic coordinates for $\Omega_s$ centered at $(p,H)$, where $p=[1:0:0:0]$ and the $H$ is the hyperplane dual to $[0:0:0:1]$. In these affine coordinates we have the $x_1$ direction is the vertical direction. We can further assume that the affine action of the pure translations on $H_0$ is by translation in the $x_3$ direction and the affine action of the pure dilations on $H_0$ is by dilation in the $x_2$ direction. The basic idea is that even though we do not know exactly what $\Omega_s$ looks like we can approximate $\Omega_s$ using $L'$-horoballs. We then use our geometric understanding of $D'$ to deduce information about $\Omega_s$.

\begin{lemma}\label{l:horoapprox}
There exist $L'$-horoballs $\mathcal{B}$ and $\mathcal{B}'$ so that in parabolic coordinates for $\Omega_s$ centered at $(p,H)$ we have $\mathcal{B}\subset \Omega_s\subset \mathcal{B}'$.  
\end{lemma}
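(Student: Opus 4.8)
The plan is to realize $\Omega_s$ and the $L'$-horoballs as epigraphs over a common domain in $H_0$ and then to compare their boundary functions. Recall from Lemma~\ref{l:propconvex} that in the chosen parabolic coordinates $D'$ is the epigraph of the convex boundary function $F(x_2,x_3)=\tfrac{x_3^2}{2}-\log x_2$ over $U_0=\{(x_2,x_3):x_2>0\}$; translating $D'$ by $\kappa e_1$ in the vertical direction shows that each $L'$-horoball centered at $(p,H)$ is the epigraph of $F+\kappa$. First I would check that $\Omega_s\setminus H$ is likewise the graph of a continuous convex boundary function $h$ over some domain $U\subseteq H_0$: the only line segments of $\partial\Omega_s$ through $p$ lie in $H$, exactly as for $D'$ in Lemma~\ref{l:propconvex}, so the boundary-function framework of Section~\ref{s:background} applies. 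With these descriptions the desired containments $\mathcal{B}\subset\Omega_s\subset\mathcal{B}'$ are equivalent to inequalities $F+\kappa'\ge h\ge F+\kappa$ on $U_0$ for suitable constants $\kappa<\kappa'$; in other words, everything reduces to showing that $g:=h-F$ is bounded.

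Boundedness of $g$ will come from equivariance together with cocompactness. Since we have conjugated so that $\Delta_s\subset L'$, the lattice $\Delta_s$ preserves both $\Omega_s$ and $D'$ and acts on $H_0$ by scaling $x_2$ (pure dilations) and translating $x_3$ (pure translations). Writing out the affine action of a general element of $L'$ from \eqref{e:liegroup1}, one sees that $h$ and $F$ satisfy the \emph{same} functional equation $\phi(e^a x_2,x_3+b)=\phi(x_2,x_3)+bx_3+\tfrac12 b^2-a$ for every $\gamma\in\Delta_s$ with parameters $(a,b)$; this is just the equivariance of a boundary function recorded in Section~\ref{s:background}. Subtracting, $g=h-F$ is $\Delta_s$-invariant. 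After the coordinate change $(u,v)=(\log x_2,x_3)$ the $\Delta_s$-action on $H_0$ becomes translation by the rank-two lattice spanned by $(a_0,0)$ and $(0,b_0)$, the dilation and translation parameters of the generators, which is cocompact. Hence the continuous function $g$ descends to the compact quotient $H_0/\Delta_s\cong T^2$ and is bounded, say $\kappa\le g\le\kappa'$. Choosing $\mathcal{B}$ and $\mathcal{B}'$ to be the $L'$-horoballs that are the epigraphs of $F+\kappa'$ and $F+\kappa$ then yields $\mathcal{B}\subset\Omega_s\subset\mathcal{B}'$.

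I expect the main obstacle to be the reduction carried out in the first paragraph, namely verifying that the boundary function $h$ is defined over exactly the model domain $U_0=\{x_2>0\}$. One inclusion is soft: the $\Delta_s$-orbit of any point of $\Omega_s\setminus H$ projects to a subset of $H_0$ whose convex hull is all of $U_0$ (the orbit contains points with $x_2$ arbitrarily large and arbitrarily close to $0$, and with $x_3$ ranging over $\R$), so convexity of $\Omega_s$ forces $U\supseteq U_0$. The reverse inclusion $U\subseteq U_0$ amounts to showing that $\Omega_s$ does not cross the plane $\{x_2=0\}$; here I would use that $\{x_2=0\}$ is the invariant hyperplane of the pure dilation carrying the non-strictly-convex boundary segment of $\Omega_s$ (the analogue of the segment $\{[c:1:0:0]\}$ exhibited in Lemma~\ref{l:propconvex}), so that proper convexity and $\Delta_s$-invariance confine $\Omega_s$ to the half-space $\{x_2>0\}$. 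Once $U=U_0$ is established, the equivariance-and-cocompactness argument above finishes the proof.
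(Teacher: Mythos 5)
Your overall strategy coincides with the paper's: both proofs reduce the lemma to comparing the boundary function $h$ of $\Omega_s$ with the model boundary function $F(x_2,x_3)=\tfrac{x_3^2}{2}-\log x_2$ over the half-plane $U_0=\{x_2>0\}$, and both extract the required bound from $\Delta_s$-equivariance of boundary functions together with cocompactness of the $\Delta_s$-action on $U_0$. Your execution of the bounding step is correct and is in fact a slight streamlining of the paper's: the paper fixes a compact fundamental domain $K\subset U_0$, chooses constants with $g-c<f<g+C$ on $K$, and then propagates these inequalities to all of $U_0$ by an intermediate value theorem/contradiction argument, whereas you observe directly that $h-F$ satisfies the trivial functional equation, i.e.\ is $\Delta_s$-invariant, hence descends to a continuous function on the compact quotient torus and is bounded. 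Both arguments rest on the same two pillars; yours packages them more cleanly.

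The genuine gap is exactly where you predicted it: the inclusion $U\subseteq U_0$, equivalently the claim that $\Omega_s$ contains no point of the plane $\{x_2=0\}$. The justification you sketch --- that $\{x_2=0\}$ is ``the invariant hyperplane of the pure dilation carrying the non-strictly-convex boundary segment of $\Omega_s$'' --- is circular, since at this stage nothing is known about the flat part of $\partial\Omega_s$ (locating it is, in effect, what the lemma accomplishes), and it is also false in the model: the flat segment $\{[c:1:0:0]\}$ of $\partial D'$ exhibited in Lemma \ref{l:propconvex} lies in the hyperplane at infinity $H$, not in the projective closure of $\{x_2=0\}$. The argument that actually closes this gap (and is the one the paper uses) is short, and you should supply it: if $\Omega_s$ contained a point $[c:0:d:1]$, then the pure dilation generator of $\Delta_s$ (parameters $(a,0)$ with $a\neq 0$ in \eqref{e:liegroup1}) acts on such a point by \emph{vertical} translation, $(c,0,d)\mapsto(c-a,0,d)$, so its iterates give an arithmetic progression on a vertical affine line, unbounded in both directions; the convex hull of this orbit is the whole line, and convexity of $\Omega_s$ then forces $\Omega_s$ to contain a complete affine line, contradicting proper convexity. (Note the paper's proof attributes this vertical translation to the pure translation, but a direct computation with \eqref{e:liegroup1} shows it is the pure dilation that translates points of $\{x_2=0\}$ vertically.) Your inclusion $U\supseteq U_0$ is fine modulo the harmless normalization that some point of $\Omega_s$ projects into $\{x_2>0\}$ rather than $\{x_2<0\}$; once points with $x_2=0$ are excluded, convexity rules out points of both signs, so this normalization costs nothing.
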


\begin{proof}
Let $f$ be the boundary function $\Omega_s$ and $g$ be the boundary function for $D'$. The domain, $U$, of $g$ is one of the two half spaces in $H_0$ that form the complement of the plane defined by the equation $x_2=0$. We claim that the domain of $f$ is $U$. The only $\Delta_s$ invariant convex subsets of $H_0$ with non-empty interior are $U$, $\overline{U}$, and $H_0$. Suppose the domain of $f$ is not $U$, then there is a point of $\partial U$ in the domain of $f$. This implies that there is a point of the form $[c:0:d:1]\in \Omega_s$. Since the action of a pure translation on this point is by translation in the vertical direction it is easy to see that the convex hull of the $\Delta_s$ orbit of this point is the affine plane whose vertical projection is $\partial U$. This contradicts the fact that $\Omega_s$ is properly convex and so the domain of $f$ is also $U$.   

Since every $L'$-horoball is simply the epigraph of the function $g+c$ where $c\in \R$ the proof will be complete if we can find positive constants $c$ and $C$ such that for all $x\in U$ $g(x)-c< f(x)< g(x)+C.$  We have seen that there is cocompact affine action of $\Delta_s$ on $U$ whose quotient is a torus. Let $K\subset U$ be a compact fundamental domain for this action. Pick $c$ and $C$ to be constants such that $(g-c)|_K< f|_K< (g+C)|_K$. 

Suppose for contradiction that there exists a point $x\in U$ such that $g(x)-c\geq f(x)$. Then by the intermediate value theorem we can find a point $x'\in U$ such that $g(x')-c=f(x)$. There exists $\gamma\in \Delta_s$ such that $\gamma\cdot x'\in K$. We see that $(f(x'),x')=(g(x')-c,x')$ and so $\gamma\cdot (f(x'),x')=\gamma\cdot(g(x')-c,x').$ By equivariance we see that $(f(\gamma\cdot x'),\gamma\cdot x')=(g(\gamma\cdot x')-c,\gamma\cdot x')$. This implies that $f(\gamma\cdot x')=g(\gamma\cdot x')-c$, which contradicts our choice of $c$ and thus $g-c<f$. A similar argument shows that $f<g+C$.  
\qed 
\end{proof}

Let $G$ be a group acting on a set $X$. If $Y\subset X$ and $H$ is a subgroup then we say that $Y$ is \emph{precisely invariant under $H$} if $\gamma\cdot Y=Y$ for all $\gamma \in H$ and $\gamma\cdot Y\cap Y=\emptyset$ for $\gamma\in G\bs H$. By combining the previous lemma with a version of the Margulis lemma for properly convex domains \cite[Thm 0.1]{CooperLongTillman11} we can show that there are precisely invariant $L'$ horoballs in $\Omega_s$.

\begin{lemma}\label{l:precinvhoroball}
For sufficiently small $s$ there is an $L'$-horoball, $\mathcal{B}\subset \Omega_s$ that is precisely invariant under $\Delta_s$. 
\end{lemma}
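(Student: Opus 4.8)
The plan is to produce the horoball by descending into the cusp and then invoking the Margulis lemma together with malnormality of the peripheral subgroup; throughout I fix a small $s\neq 0$, so that $\Omega_s$ exists and is properly convex (Proposition \ref{p:convexstructure}), $\Delta_s\subset L'$, and Lemma \ref{l:horoapprox} applies. First I would record the free part of the statement: by Lemma \ref{l:horoapprox} fix $L'$-horoballs with $\mathcal{B}_0\subset \Omega_s\subset \mathcal{B}'$. Since the $L'$-horoballs centered at $p$ are nested, every $L'$-horoball lying deeper than $\mathcal{B}_0$ is automatically contained in $\Omega_s$, and any such horoball is preserved by $\Delta_s$ because $\Delta_s\subset L'$. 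Thus $\Delta_s$-invariance is automatic, and the real content of the lemma is the disjointness condition $\gamma\mathcal{B}\cap\mathcal{B}=\emptyset$ for $\gamma\in\Gamma_s\setminus\Delta_s$.

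Second, I would single out a nontrivial pure translation $\delta\in\Delta_s$ (for the figure-eight cusp the meridian provides one) and show that its Hilbert displacement tends to $0$ as one descends into the cusp. Concretely, $\delta$ fixes the $x_2$-coordinate and translates $x_3$, so the segment from $x$ to $\delta x$ has components only in the vertical and $x_3$ directions; the Finsler estimates of exactly the type established in Lemma \ref{l:euclideanvolume}, where $\Abs{v^1_c}_{D'}=c/(x_1-k_2)$ and $\Abs{v^3_c}_{D'}\sim c/\sqrt{x_1}$, show $\dhilb[\Omega_s]{x}{\delta x}\le \dhilb[\mathcal{B}_0]{x}{\delta x}=O(x_1^{-1/2})$ uniformly over the compact range of $(x_2,x_3)$ coming from a fundamental domain. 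Hence, letting $\mu>0$ be the Margulis constant for $\Omega_s$ from \cite[Thm 0.1]{CooperLongTillman11}, I can choose a deep $L'$-horoball $\mathcal{B}\subset\Omega_s$ with $\dhilb[\Omega_s]{x}{\delta x}<\mu$ for every $x\in\mathcal{B}$.

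Third comes the disjointness argument. Suppose $\gamma\mathcal{B}\cap\mathcal{B}\ne\emptyset$ and pick $x\in\mathcal{B}\cap\gamma\mathcal{B}$. Then $x$ and $\gamma^{-1}x$ both lie in $\mathcal{B}$, so $\delta$ and $\gamma\delta\gamma^{-1}$ each move $x$ less than $\mu$; by the Margulis lemma the group $\langle\delta,\gamma\delta\gamma^{-1}\rangle$ is virtually nilpotent. Transporting this to $\Gamma\cong\Gamma_s$ via the faithful $\rho_s$, I would use that $\delta$ is a peripheral (parabolic-type) element of the figure-eight group: any elementary subgroup containing a parabolic fixes that parabolic's unique fixed point, so $\langle\delta,\gamma\delta\gamma^{-1}\rangle$ lies in the maximal peripheral subgroup $\Delta_s$ determined by $\delta$, whence $\gamma\delta\gamma^{-1}\in\Delta_s\cap\gamma\Delta_s\gamma^{-1}$. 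Because the peripheral $\Z^2$ of a one-cusped hyperbolic $3$-manifold is malnormal, $\gamma\notin\Delta_s$ would force this intersection to be trivial, contradicting $\gamma\delta\gamma^{-1}\ne1$. Therefore $\gamma\in\Delta_s$, giving precise invariance.

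I expect the third step to be the crux. The delicate points are (i) ensuring the Margulis lemma of \cite{CooperLongTillman11} supplies a genuinely positive constant for the deformed domain $\Omega_s$, and (ii) the passage from ``virtually nilpotent in $\pgl$'' to the algebraic conclusion about shared fixed points, where I rely on the abstract structure of the figure-eight group, namely the maximality and malnormality of the peripheral $\Z^2$ and the classification of its elementary subgroups, rather than on projective geometry directly. The displacement estimate of the second step, though a genuine computation, is routine given the formulas already recorded in Lemma \ref{l:euclideanvolume}.
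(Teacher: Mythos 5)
Your proposal is correct and follows essentially the same strategy as the paper's proof: invariance is automatic since $\Delta_s\subset L'$, one descends into the cusp until the displacement of a peripheral element drops below the properly convex Margulis constant of \cite[Thm 0.1]{CooperLongTillman11}, the Margulis lemma makes $\langle\delta,\gamma\delta\gamma^{-1}\rangle$ virtually nilpotent, and the hyperbolic group theory of the figure-eight group then forces $\gamma\in\Delta_s$. The only cosmetic differences are that the paper proves the small-displacement claim by a cross-ratio monotonicity argument along vertical lines (rather than your Finsler estimate in the style of Lemma \ref{l:euclideanvolume}), and finishes the algebraic step via centralizers (virtually nilpotent $\Rightarrow$ abelian $\Rightarrow$ shared parabolic fixed point $\Rightarrow$ $\gamma$ commutes with $\mathcal{M}_s$) rather than via malnormality of the peripheral $\Z^2$.
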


\begin{proof}
Let $\mu$ be a 3-dimensional properly convex Margulis constant, which exists by \cite[Thm 0.1]{CooperLongTillman11}. By Lemma \ref{l:horoapprox} we know that we can find an $L'$-horoball $\mathcal{B}'\subset \Omega_s$ and let $\mathcal{B}''$ be a $L'$-horoball contained in the interior of $\mathcal{B}'$. Let $z\in \partial \mathcal{B}''$. We claim that for every $y\in \partial \mathcal{B}''$ $\dhilb[\mathcal{B}']{y}{\mathcal{M}_s y}=\dhilb[\mathcal{B}']{z}{\mathcal{M}_s z}$. To see this observe that $\Delta_s$ acts transitively on $\partial \mathcal{B}''$ and so there exists $\gamma\in \Delta_s$ such that $\gamma z=y$. From this fact we deduce that 
$$\dhilb[\mathcal{B}']{y}{\mathcal{M}_s y}=\dhilb[\mathcal{B}']{\gamma z}{\mathcal{M}_s\gamma z}=\dhilb[\mathcal{B}']{\gamma z}{\gamma \mathcal{M}_s z}=\dhilb[\mathcal{B}']{z}{\mathcal{M}_s z}.$$

\begin{figure}
 \begin{center}
  \includegraphics[scale=.2]{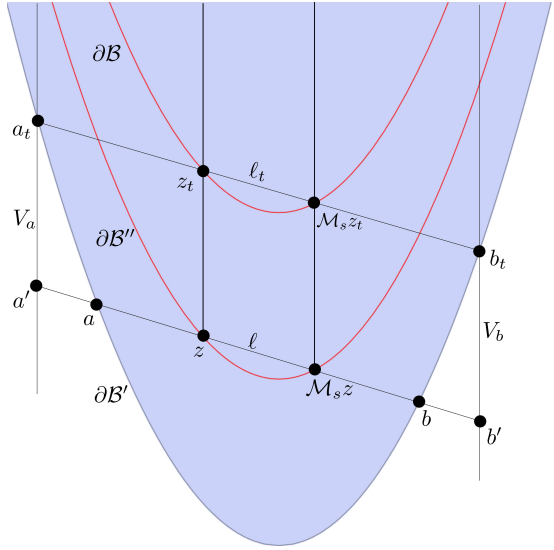}
  \caption{Distance estimates for $L'$-horoballs \label{f:horoballs}}
 \end{center}
\end{figure}

As a result we see that every point on $\partial \mathcal{B}''$ is moved the same distance in the Hilbert metric on $\mathcal{B}'$ by $\mathcal{M}_s$. For $t>0$ let $z_t=z+te_1$. Let $\ell$ be the affine line connecting $z$ and $\mathcal{M}_s z$ and let $a$ and $b$ be the intersection points of $\ell$ with $\partial \mathcal{B}'$. Similarly, let $\ell_t$ be the affine line connecting $z_t$ and $\mathcal{M}_s z_t$ and let $a_t$ and $b_t$ be the intersection points of $\ell_t$ with $\partial \mathcal{B}'$. Let $V_a$ and $V_b$ be the vertical lines passing through $a_t$ and $b_t$, respectively. Define $a'$ and $b'$ to be the respective intersection points of $V_a$ and $V_b$ with $\ell$. Figure \ref{f:horoballs} depicts this configuration and we see that 
$$[a_t:z_t:\mathcal{M}_s z_t: b_t]=[a':z:\mathcal{M}_s z:b']<[a:z:\mathcal{M}_s z:b],$$
and thus $\dhilb[\mathcal{B}']{z_t}{\mathcal{M}_s z_t}$ is a strictly decreasing function of $t$. Furthermore, as $t\to\infty$ we see that $[a_t:z_t:\mathcal{M}_sz_t:b_t]\to 1$, and so for sufficiently large $t$ we have $\dhilb[\mathcal{B}']{z_t}{\mathcal{M}_s z_t}<\mu$. Let $z_T$ be such that $\dhilb[\mathcal{B}']{z_T}{\mathcal{M}_s z_T}<\mu$ and let $\mathcal{B}$ be the $L'$-horoball such that $z_T\in \partial\mathcal{B}$. By construction we see that every point $z'\in \mathcal{B}$ is moved at most $\mu$ in the Hilbert metric on $\mathcal{B}'$ by $\mathcal{M}_s$. Since $\mathcal{B}\subset \Omega_s$ we have that $\dhilb[\Omega_s]{z'}{\mathcal{M}_s z'}\leq \dhilb[\mathcal{B}']{z'}{\mathcal{M}_sz'}<\mu$.  Let $\tau\in \Gamma_s$ and suppose that $u\in \tau\mathcal{B}\cap \mathcal{B}$. The proof will be complete if we can show that $\tau \in \Delta_s$.  Since $u\in \tau \mathcal{B}$ we can find $v\in \mathcal{B}$ such that $\tau v=u$. As a result we have 
$$\dhilb[\Omega_s]{u}{\tau \mathcal{M}_s \tau^{-1}u}=\dhilb[\Omega_s]{\tau v}{\tau \mathcal{M}_s v}=\dhilb[\Omega_s]{v}{\mathcal{M}_s v}<\mu.$$
The elements $\mathcal{M}_s$ and $\tau \mathcal{M}_s \tau^{-1}$ both move $u$ a distance less than $\mu$ in the Hilbert metric on $\Omega_s$. By the properly convex Margulis lemma we see that $\langle \mathcal{M}_s,\tau \mathcal{M}_s \tau^{-1} \rangle$ is virtually nilpotent.

Since $\Gamma_s$ is the fundamental group of a finite volume hyperbolic 3-manifold, it admits an action on $\HH^3$ and its ideal boundary. Consequently we see that element of $\Gamma_s$ commute if and only if they have the same fixed point set for this action. Since any virtually nilpotent subgroup of the fundamental group of a finite volume hyperbolic 3-manifold is abelian we see that $\mathcal{M}_s$ and $\tau\mathcal{M}_s\tau^{-1}$ commute. Since $\mathcal{M}_s$ is contained in a peripheral subgroup it acts as a parabolic isometry on $\HH^3$. This implies that $\mathcal{M}_s$ and $\tau\mathcal{M}_s\tau^{-1}$ have the same fixed point. Furthermore, $\tau$ also fixes the unique fixed point of $\mathcal{M}_s$. Since the action of $\Gamma_s$ on $\HH^3$ is properly discontinuous this implies that $\tau$ also has a single fixed point and we conclude that $\tau$ and $\mathcal{M}_s$ commute. Thus $\tau\in \Delta_s$.

\qed 
\end{proof}

\begin{remark}\label{r:precinvhoroball}
From the proof of Lemma \ref{l:precinvhoroball} we see that if $\mathcal{B}$ is an $L'$-horoball that is precisely invariant under $\Delta_s$ and $\mathcal{B}'$ is a $L'$-horoball such that $\mathcal{B}'\subset \mathcal{B}$, then $\mathcal{B}'$ is also precisely invariant under $\Delta_s$.
\end{remark}

We can now complete the proof of Theorem \ref{t:maintheorem}. 

\begin{proof}[Proof of Theorem \ref{t:maintheorem}]
From Proposition \ref{p:convexstructure} we know that for sufficiently small $s$ that $\rho_s$ is the holonomy of a properly convex projective structure on $M$ and that this structure is strictly convex if and only if $s=0$. Thus we can find $\Omega_s$ such that $M\cong \Omega_s/\Gamma_s$. The proof will be complete if we can show that $\Omega_s/\Gamma_s$ has finite Busemann volume. 

From Lemma \ref{l:precinvhoroball} and Remark \ref{r:precinvhoroball} we know that we can find $L'$-horoballs $\mathcal{B}$ and $\mathcal{B}'$ that are precisely invariant under $\Delta_s$ and such that $\mathcal{B}'\subset \mathcal{B}\subset \Omega_s$. Since $\mathcal{B}'$ is precisely invariant under $\Delta_s$ we see that $\mathcal{B}'/\Delta_s$ is an embedded submanifold of $\Omega_s/\Gamma_s$. The complement $(\Omega_s/\Gamma_s)\bs\left(\mathcal{B}'/\Delta_s\right)$ is compact and so the proof will be complete if we can show that $\mathcal{B}'/\Delta_s$ is a finite Busemann volume submanifold of $\Omega_s/\Gamma_s$.

Let $K$ be a fundamental domain in $\mathcal{B}'$ for the action of $\Delta_s$. By Proposition \ref{t:finitevolume} we know that $\hilbvol[\mathcal{B}]{K}<\infty$. However $\mathcal{B}\subset \Omega_s$ and so $\hilbvol[\Omega_s]{K}\leq \hilbvol[\mathcal{B}]{K}<\infty$. We conclude that $\mathcal{B}'/\Delta_s$ is a finite volume submanifold of $\Omega_s/\Gamma_s$. 
\qed 
\end{proof}

\bibliographystyle{spbasic}
\bibliography{bibliography}

\end{document}